\def\mf{\mathbf}
\def\mb{\mathbb}
\def\mc{\mathcal}
\def\beq{\begin{equation*}}
\def\eeq{\end{equation*}}
\def\bql{\begin{equation}}
\def\eql{\end{equation}}
\def\bqn{\begin{eqnarray*}}
\def\eqn{\end{eqnarray*}}
\def\bnl{\begin{eqnarray}}
\def\enl{\end{eqnarray}}
\def\bna{\bql\begin{array}{rcl}}
\def\ena{\end{array}\eql}
\def\bnn{\beq\begin{array}{rcl}}
\def\enn{\end{array}\eeq}
\def\bma{\begin{bmatrix}}
\def\ema{\end{bmatrix}}
\def\bmx{\begin{matrix}}
\def\emx{\end{matrix}}
\def\ben{\begin{enumerate}}
\def\een{\end{enumerate}}
\def\bit{\begin{itemize}}
\def\eit{\end{itemize}}
\def\bei{\begin{itemize}}
\def\eei{\end{itemize}}
\def\bet{\begin{tabular}}
\def\eet{\end{tabular}}
\newcommand{\allcaps}[1]{\uppercase\expandafter{#1}}
\def\R{\mb{R}}
\def\Rd{\mb{R}^d}
\newcommand{\E}{\mathbb{E}}
\title[Control with learning on the fly: System with unknown drift]{Optimal Control with Learning on the Fly: System with Unknown Drift}
\author{\Name {Daniel Gurevich} \Email {dgurevich@princeton.edu}\\
\Name {Debdipta Goswami} \Email {goswamid@princeton.edu}\\
\Name {Charles L. Fefferman} \Email {cf@math.princeton.edu}\\
\Name {Clarence W. Rowley} \Email {cwrowley@princeton.edu}\\
\addr Princeton University, NJ, USA}
\begin{document}

\maketitle

\begin{abstract}%
 This paper derives an optimal control strategy for a simple stochastic
 dynamical system with constant drift and an additive control input. Motivated
 by the example of a physical system with an unexpected change in its dynamics,
 we take the drift parameter to be unknown, so that it must be learned while
 controlling the system. The state of the system is observed through a linear
 observation model with Gaussian noise. In contrast to most previous work, which focuses on a controller's asymptotic performance over an infinite time horizon, we minimize a quadratic cost function over a finite time horizon.
  The performance of our control strategy is quantified by comparing its cost
  with the cost incurred by an optimal controller that has full knowledge of the
  parameters. This approach gives rise to several notions of ``regret.'' We
  derive a set of control strategies that provably minimize the worst-case
  regret; these arise from Bayesian strategies that assume a specific fixed
  prior on the drift parameter. This work suggests that examining Bayesian
  strategies may lead to optimal or near-optimal control strategies for a much
  larger class of realistic dynamical models with unknown parameters.
\end{abstract}

\begin{keywords}%
  optimal control, regret, competitive ratio, adaptive control, learning%
\end{keywords}

\section{Introduction}
With the proliferation of autonomous controllers in every aspect of human life,
including many safety-critical systems, it is of paramount importance to ensure
their robustness in the face of unexpected changes in the environment and
failures of physical components. Hence, modern controllers must be able to adapt
to unexpected and uncertain changes in a system's dynamics. However, such
capability is currently limited. For instance, while an expert human pilot could
successfully land an aircraft even after damage such as an engine failure, this is an unlikely feat for a present-day autopilot. 

When a physical system undergoes a sudden change in its dynamics, forcing an
adjustment of some model parameters used by the controller, there is only a
limited time and amount of data available to adjust the model. Moreover, in most
safety-critical scenarios, control must be applied without interruption, i.e.,
there is no opportunity for a dedicated learning phase. Recently developed
data-driven learning methods for dynamical systems (for some examples, see
\cite{Brunton2016}, \cite{Cubitt2012}, \cite{Hills2015}, \cite{Schmidt2009}) are either data-intensive or require substantial \emph{a priori} system knowledge. 
\cite{Ahmadi2017} uses the approach of differential inclusions to assess the
safety of a trajectory of an uncontrolled and unknown system; however, it does
not suggest how to identify a control law that will guarantee this safety. A
common approach for controlling an unmodeled system is to first learn the
parameters in the model, and subsequently design an optimal controller from
the learned model. A related approach is to divide the time horizon of the
control into epochs, and after each epoch, first refine the model and then
update the controller accordingly. This latter approach yields asymptotically
optimal results in the limit of large time \citep{Cohen2019}. Other methods such
as the \textit{optimism-in-the-face-of-uncertainty} (OFU) paradigm first
introduced by \cite{lai1985asymptotically} use simultaneous estimation of the
system parameter and control design \cite{abbasi2011regret} that theoretically
achieves a $\Tilde{O}(\sqrt T)$ regret bound in the large time limit.  However,
none of the aforementioned methods are well-suited to control of a
safety-critical system on the most relevant short time scales.

The present paper specifically deals with the problem of finding an optimal control on a finite time horizon with no distinct learning and control phases. In our model, we consider a stochastic dynamical system with an unknown constant drift, an additive control input, and a Wiener process noise. The system state is observed through a linear measurement that is corrupted by an independent Wiener measurement noise. Thus, both the process and observation dynamics obey linear stochastic differential equations (SDEs).

This paper builds on the prior work of \cite{Fefferman2020}, which treated a
similar problem with multiple special restrictions. In that work, it was assumed
that the system state is one-dimensional and can be measured without any sensor
noise. The present paper considers several generalizations: the state is
multi-dimensional, correlations between different components of the state are
taken into account, and measurements are noisy. 

The paper is organized as follows. In Section~\ref{Sec: Prob_form} we formulate
the Bayesian optimal control problem, in which we have a prior probability distribution for the unknown parameter, as well as the ``agnostic'' optimal control problem, for which we have no prior belief, and optimality is based on the notion of ``regret.''  In Section~\ref{Sec: Bayesian_Strategy} we show that Bayesian
strategies are candidate optimal strategies for agnostic control.
In Section~\ref{Sec: Agnostic} we present agnostic control strategies that
minimize the worst-case multiplicative or additive regret. In Section~\ref{Sec: Conclusions} we summarize our findings and discuss future research
directions. Proofs for the theorems in Sections \ref{Section: Constant_Regret_Theorem} and \ref{Sec: Agnostic} are provided in the appendices.

\section{Problem Formulation}\label{Sec: Prob_form}
In the discrete version of our problem, we consider a dynamical system with noisy observations on the time interval $[0,T]$
\bql\label{Eq: Discrete_Dynamics}
\Delta\mf{q} = (\mf{a} + \mf{u}(t))\Delta t + \Delta\mf{W}, \qquad
\Delta\mf{y} = \mf{q}\Delta t + \Delta \mf{V},
\eql 
where $\mf{q}\in\Rd$ denotes the position of a particle which is observed through a noisy measurement $\mf{y}\in\Rd$. $\Delta\mf{W}$ and $\Delta\mf{V}$ are two independent normally distributed random variables with zero mean and standard deviation $\Sigma_W\sqrt{\Delta t}$ and $\Sigma_V\sqrt{\Delta t}$ respectively. A simple scaling and rotation allow us to take $\Sigma_W = \mf{I}_{d\times d}$, as we will do from now on. We also take the initial conditions $\mf{y}(0) = 0$ and $\mf{q}(0)$ normally distributed with $\E[\mf{q}(0)\mf{q}(0)^T] = \Sigma_{\mf{ q}_0}$. As $\Delta t \rightarrow 0$, we obtain the continuous-time version of the dynamics, 
\bql\label{Eq: Continuous_Dynamics}
d\mf{q}(t) = (\mf{a} + \mf{u}(t))dt + d\mf{W}(t)\qquad
d\mf{y}(t) = \mf{q}(t)dt + d\mf{V}(t),
\eql 
where $\mf{W}(t)$ and $\mf{V}(t)$ are independent $d$-dimensional scaled Wiener
processes with their generalized derivatives $d\mf{W}(t)$ and $d\mf{V}(t)$
corresponding to white noise. Given some $T_0 \in [0, T)$, we are allowed to observe the system for $t\in[0,
T_0]$ without applying any control, after which we control the system on $t \in
[T_0, T]$. The main focus of this paper is on the most practically challenging case arising when $T_0 = 0$, i.e., there is no dedicated learning phase and the control must be applied from the beginning. 
Our objective is to find the optimal control strategy $\mf{u}(t)$, depending on the measurements $\{y(\tau) : 0 \leq \tau < t\}$ and taking values in $\Rd$,
that minimizes the cost function
\bql \label{Eq: Cost}
J(\mf{q},\mf{u};\mf{a}) = \E\left[\;\int\limits_{T_0}^T \left( \mf{q}^T Q\mf{q} + \mf{u}^T R\mf{u}\right)dt\right].
\eql
If the drift parameter $\mf{a}$ is known, then the expectation in \eqref{Eq:
  Cost} is well-defined, and the task reduces to a classical optimal control
problem. However, if $\mf{a}$ is unknown, it is not immediately clear what should be considered an optimal control strategy. 

\subsection{Notions of Optimality}
When the parameter $\mf{a}$ is unknown, the performance of a control strategy can be quantified in two different ways.
\subsubsection{Bayesian Control}\label{Bayesian}
In the Bayesian version of our problem, a prior belief is assumed on $\mf{a}$
--- that is, $\mf{a}$ is chosen at random according to a probability measure $\mu(\mf{a})$. The expected cost associated with $\mf{u}$ can then be computed as 
\bql \label{Eq: Cost_Bayesian}
\mc{J}(\mf{q},\mf{u};\mu) = \E_{\mf{a}}\left[J(\mf{q},\mf{u};\mf{a})\right] = \int_{\Rd} J(\mf{q},\mf{u};\mf{a})\,d\mu(\mf{a}).
\eql
In this case, the objective is to find a control strategy $\mf{u}(t)$
that minimizes the expected value in \eqref{Eq: Cost_Bayesian} given the prior belief $\mu(a)$.

\subsubsection{Agnostic Control}
\begin{wrapfigure}[12]{l}{0.8\textwidth}
\centering
\includegraphics[trim=0cm 1cm 0cm 1cm, clip=true, width=0.8\textwidth]{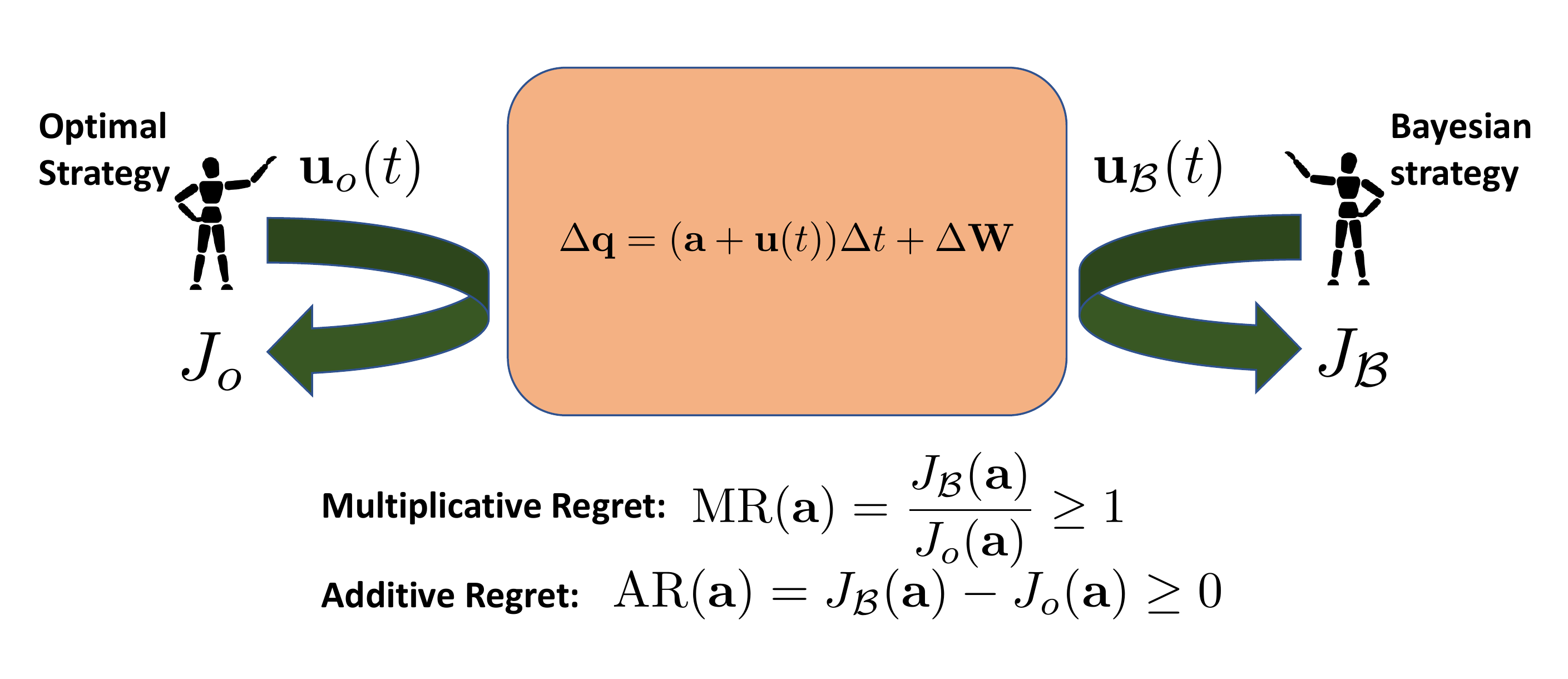}
\caption{Different notions of ``regret''.\label{Fig: Regret}}
\end{wrapfigure}

While Bayesian strategies yield a well-defined cost and performance metric, their inherent dependence on the prior belief makes them unsuitable when no prior information on the parameter is available. We wish to find a strategy that does not depend on any prior belief on $\mf{a}$. We call such a strategy an \emph{agnostic strategy}, and we use the notion of \emph{regret} to measure its performance, in particular comparing against the optimal strategy when $\mf{a}$ is known. Suppose the agnostic strategy $\mc{B}$ yields a cost $J_{\mc{B}}(\mf{a})$ according to \eqref{Eq: Cost}, which depends on the value of $\mf{a}$. The optimal strategy with full knowledge of $\mf{a}$ instead incurs a cost $J_o(\mf{a})$. Clearly, $J_o(\mf{a})\leq J_{\mc{B}}(\mf{a})$ for all $\mc{B}$ and $\mf{a}$. The regret achieved by the agnostic strategy $\mc{B}$ can be defined in various ways. In all cases, we seek a strategy that minimizes the worst-case regret across all possible values of $\mf{a}$. In this paper, we focus on the following definitions of regret:
\begin{itemize}
    \item \emph{Additive regret}: The additive regret (often called the regret in other literature) is the difference
    \beq \operatorname{AR}_{\mc{B}}(\mf{a}) \triangleq J_{\mc{B}}(\mf{a}) - J_o(\mf{a}) \geq 0. \eeq
    A strategy that minimizes the worst-case additive regret $\operatorname{AR}^*_{\mc{B}} = \sup_{\mf{a}}\operatorname{AR}_{\mc{B}}(\mf{a})$ is deemed optimal.
    \item \emph{Multiplicative regret}: The multiplicative regret or \emph{competitive ratio}, as it is more commonly called in the literature, is the ratio
    \beq \operatorname{MR}_{\mc{B}}(\mf{a}) \triangleq \dfrac{J_{\mc{B}}(\mf{a})}{J_o(\mf{a})} \geq 1. \eeq 
    Under this definition, the optimal agnostic strategy minimizes the worst-case multiplicative regret $\operatorname{MR}^*_{\mc{B}} = \sup_{\mf{a}}\operatorname{MR}_{\mc{B}}(\mf{a})$. Fig.~\ref{Fig: Regret} shows a schematic for different notions of regret.
\end{itemize}

\subsection{Bayesian Strategies Are Candidate Agnostic Strategies} \label{Section: Constant_Regret_Theorem}
While a Bayesian strategy can generally be identified by variational methods, a
good agnostic strategy cannot readily be constructed: the worst-case regret of a
strategy is a complex nonlinear function of both the optimal cost achievable at
fixed $\mf{a}$ and the strategy's cost, examined over all possible values of
$\mf{a}$. However, we may search for optimal or near-optimal agnostic strategies
which arise as Bayesian strategies from a fixed prior. We are assisted by the
following theorem, which provides both lower and upper bounds on the optimal
worst-case regret.
%
\begin{theorem}\label{Thm: Regret_Bounds}
Suppose that $\mc{A}$ is the Bayesian strategy for the prior $\mu$ and $\mc{B}$
is an optimal agnostic strategy in the additive or multiplicative regret setting. Then respectively
\begin{equation} \label{Eq: AR_Bound}
    \int \operatorname{AR}_{\mc{A}}(\mf{a})\,d\mu(\mf{a}) \leq \operatorname{AR}^*_{\mc{B}} \leq \operatorname{AR}^*_{\mc{A}}
\end{equation}
or
\begin{equation} \label{Eq: MR_Bound}
    \frac{\int \operatorname{MR}_{\mc{A}}(\mf{a})J_o(\mf{a})\,d\mu(\mf{a})}{\int J_o(\mf{a})\,d\mu(\mf{a})} \leq \operatorname{MR}^*_{\mc{B}} \leq \operatorname{MR}^*_{\mc{A}}.
\end{equation}
\end{theorem}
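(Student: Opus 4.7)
The upper bounds are immediate from the definition of $\mathcal{B}$ as an optimal agnostic strategy: since $\mathcal{A}$ is a feasible (non-anticipative) control strategy too, its worst-case additive (resp.\ multiplicative) regret cannot be less than that of an optimizer, giving $\operatorname{AR}^*_{\mathcal{B}} \leq \operatorname{AR}^*_{\mathcal{A}}$ and $\operatorname{MR}^*_{\mathcal{B}} \leq \operatorname{MR}^*_{\mathcal{A}}$. So the work is in the lower bounds, and the unifying idea is the same in both cases: replace the supremum over $\mathbf{a}$ by a $\mu$-average (which is always $\leq$ the sup), and then invoke optimality of the Bayesian strategy $\mathcal{A}$ with respect to that very average.

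For the additive bound, I would first apply the trivial inequality
\[
\operatorname{AR}^*_{\mathcal{B}} \;=\; \sup_{\mathbf{a}} \operatorname{AR}_{\mathcal{B}}(\mathbf{a}) \;\geq\; \int \operatorname{AR}_{\mathcal{B}}(\mathbf{a})\,d\mu(\mathbf{a}) \;=\; \int J_{\mathcal{B}}(\mathbf{a})\,d\mu(\mathbf{a}) \;-\; \int J_o(\mathbf{a})\,d\mu(\mathbf{a}).
\]
Next, the key observation: $\mathcal{A}$ is by definition the strategy that minimizes $\mathcal{J}(\mathbf{q},\mathbf{u};\mu) = \int J(\mathbf{q},\mathbf{u};\mathbf{a})\,d\mu(\mathbf{a})$ from \eqref{Eq: Cost_Bayesian}. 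Hence $\int J_{\mathcal{A}}(\mathbf{a})\,d\mu(\mathbf{a}) \leq \int J_{\mathcal{B}}(\mathbf{a})\,d\mu(\mathbf{a})$, and substituting yields
\[
\operatorname{AR}^*_{\mathcal{B}} \;\geq\; \int J_{\mathcal{A}}(\mathbf{a})\,d\mu(\mathbf{a}) - \int J_o(\mathbf{a})\,d\mu(\mathbf{a}) \;=\; \int \operatorname{AR}_{\mathcal{A}}(\mathbf{a})\,d\mu(\mathbf{a}),
\]
which is \eqref{Eq: AR_Bound}.

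For the multiplicative bound, the same strategy works provided one chooses the right averaging measure. Weighting by $J_o(\mathbf{a})\,d\mu(\mathbf{a})$ makes the ratio telescope: since $\operatorname{MR}_{\mathcal{B}}(\mathbf{a}) = J_{\mathcal{B}}(\mathbf{a})/J_o(\mathbf{a})$,
\[
\operatorname{MR}^*_{\mathcal{B}} \;=\; \sup_{\mathbf{a}} \operatorname{MR}_{\mathcal{B}}(\mathbf{a}) \;\geq\; \frac{\int \operatorname{MR}_{\mathcal{B}}(\mathbf{a}) J_o(\mathbf{a})\,d\mu(\mathbf{a})}{\int J_o(\mathbf{a})\,d\mu(\mathbf{a})} \;=\; \frac{\int J_{\mathcal{B}}(\mathbf{a})\,d\mu(\mathbf{a})}{\int J_o(\mathbf{a})\,d\mu(\mathbf{a})}.
\]
Applying $\int J_{\mathcal{A}}\,d\mu \leq \int J_{\mathcal{B}}\,d\mu$ again and reinserting the factors $J_o(\mathbf{a})$ to rewrite $\int J_{\mathcal{A}}\,d\mu$ as $\int \operatorname{MR}_{\mathcal{A}}(\mathbf{a}) J_o(\mathbf{a})\,d\mu(\mathbf{a})$ gives \eqref{Eq: MR_Bound}.

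\paragraph{Anticipated obstacle.}
There is no substantial technical obstacle; the argument is essentially a one-line duality: ``sup $\geq$ weighted average, and Bayesian strategies beat everyone on the weighted average.'' The only points to verify carefully are (i) that $\int J_{\mathcal{B}}(\mathbf{a})\,d\mu(\mathbf{a})$ and $\int J_o(\mathbf{a})\,d\mu(\mathbf{a})$ are finite so that subtraction and division are meaningful (one needs mild integrability hypotheses on $\mu$ and on the LQ cost structure, which should be stated or assumed), and (ii) that the Bayesian optimum is attained, or else one replaces $\mathcal{A}$ by a minimizing sequence and the same chain of inequalities goes through in the limit.
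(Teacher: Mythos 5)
Your proposal is correct and follows essentially the same argument as the paper: the upper bounds are immediate from optimality of $\mc{B}$, and the lower bounds come from combining the Bayesian optimality inequality $\int J_{\mc{A}}\,d\mu \leq \int J_{\mc{B}}\,d\mu$ with the bound of a $\mu$-average (respectively a $J_o\,d\mu$-weighted average) by the supremum. The only difference is the order in which these two inequalities are chained, which is immaterial.
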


\begin{proof}
The proof is given in Appendix \ref{App: Regret_Bounds}.
\end{proof}
As a corollary, we obtain a simple criterion for showing that a Bayesian strategy is an optimal agnostic strategy, which was previously used in \cite{Fefferman2020}.
\begin{corollary} \label{Cor: Constant_Regret}
Suppose the Bayesian strategy $\mc{A}$ is optimal under the prior $\mu$, achieves constant regret $R$ (additive or multiplicative) for all $\mf{a}$ in the support of $\mu$, and achieves regret not exceeding $R$ for all $\mf{a}$ not in the support of $\mu$. Then $\mc{A}$ is an optimal agnostic strategy which achieves a worst-case regret of $R$.
\end{corollary}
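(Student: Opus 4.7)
The plan is to derive the corollary as a direct application of Theorem~\ref{Thm: Regret_Bounds}, by using the constant-regret hypothesis to force the lower and upper bounds in that theorem to collapse onto the common value $R$. The key observation is that the hypothesis on the Bayesian strategy $\mathcal{A}$ controls both sides of the sandwich simultaneously: it pins down the pointwise supremum of $\mathrm{AR}_{\mathcal{A}}$ (or $\mathrm{MR}_{\mathcal{A}}$) and simultaneously fixes the value of the $\mu$-average appearing on the left-hand side.

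For the additive case, I would first note that the assumption $\mathrm{AR}_{\mathcal{A}}(\mathbf{a}) \le R$ for every $\mathbf{a}$ (equality on $\operatorname{supp}\mu$, inequality off the support) gives $\mathrm{AR}^*_{\mathcal{A}} = \sup_{\mathbf{a}} \mathrm{AR}_{\mathcal{A}}(\mathbf{a}) = R$, where the equality is attained at any point of $\operatorname{supp}\mu$. Next, since $\mu$ is supported where $\mathrm{AR}_{\mathcal{A}}(\mathbf{a}) = R$, one has
\[
\int \mathrm{AR}_{\mathcal{A}}(\mathbf{a})\,d\mu(\mathbf{a}) \;=\; R\int d\mu(\mathbf{a}) \;=\; R.
\]
Plugging these two evaluations into \eqref{Eq: AR_Bound} gives $R \le \mathrm{AR}^*_{\mathcal{B}} \le R$, so the optimal worst-case additive regret equals $R$, and $\mathcal{A}$ achieves exactly this value, making $\mathcal{A}$ itself optimal.

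The multiplicative case is handled identically, with $\mathrm{MR}$ in place of $\mathrm{AR}$. The only new wrinkle is that the lower bound in \eqref{Eq: MR_Bound} is a $J_o$-weighted average rather than a plain $\mu$-average, but this is harmless: on $\operatorname{supp}\mu$ we have $\mathrm{MR}_{\mathcal{A}}(\mathbf{a}) = R$, so
\[
\frac{\int \mathrm{MR}_{\mathcal{A}}(\mathbf{a})\,J_o(\mathbf{a})\,d\mu(\mathbf{a})}{\int J_o(\mathbf{a})\,d\mu(\mathbf{a})} \;=\; \frac{R\int J_o(\mathbf{a})\,d\mu(\mathbf{a})}{\int J_o(\mathbf{a})\,d\mu(\mathbf{a})} \;=\; R,
\]
while the upper bound $\mathrm{MR}^*_{\mathcal{A}} = R$ is read off from the same pointwise bound as before (using $J_o(\mathbf{a}) > 0$ to ensure the ratio $\mathrm{MR}_{\mathcal{A}}$ is well-defined). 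Sandwiching again gives $\mathrm{MR}^*_{\mathcal{B}} = R$.

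There is no real obstacle here: the work has been done in Theorem~\ref{Thm: Regret_Bounds}, and the corollary is essentially bookkeeping, checking that the two bounds pinch together under the constant-regret hypothesis. The only point requiring mild care is handling values of $\mathbf{a}$ outside $\operatorname{supp}\mu$, where one must invoke the additional hypothesis (regret not exceeding $R$) to conclude $\mathrm{AR}^*_{\mathcal{A}} \le R$ (resp.\ $\mathrm{MR}^*_{\mathcal{A}} \le R$); without it, $\mathcal{A}$ could still be optimal but might not itself realize the worst-case regret $R$ as the pointwise supremum.
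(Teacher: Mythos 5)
Your proposal is correct and follows exactly the paper's own (one-line) argument: under the constant-regret hypothesis both the lower and upper bounds in Theorem~\ref{Thm: Regret_Bounds} evaluate to $R$, so the sandwich collapses and $\mc{A}$ itself attains the optimal worst-case regret. You have simply written out the bookkeeping that the paper leaves implicit, including the correct observation that the off-support hypothesis is what pins $\operatorname{AR}^*_{\mc{A}}$ (resp.\ $\operatorname{MR}^*_{\mc{A}}$) down to $R$.
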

\begin{proof}
Under the conditions of this corollary, both the lower and upper bounds on the optimal worst-case regret in Theorem \ref{Thm: Regret_Bounds} are equal to $R$, which concludes the proof.
\end{proof}
In particular, any Bayesian strategy achieving constant regret for all $\mf{a}$ is optimal by the corollary. Thus, the plan of attack in this paper will be as follows. First, we construct the optimal Bayesian strategies for a specific set of priors $\mu$. Then we will compute the regret for those strategies for the values of $\mf{a}$ and solve for the specific prior $\mu$ that provides us with an optimal Bayesian strategy with constant regret, which is also an optimal agnostic strategy.  

\section{A Bayesian Strategy for Unknown $\mf{a}$}\label{Sec: Bayesian_Strategy}
This section demonstrates the optimal strategy for the Bayesian problem discussed in Section \ref{Bayesian} given a fixed prior belief $\mu(\mf{a})$. We restrict ourselves to Gaussian prior beliefs only, i.e., 
$ d\mu(\mf{a}) = \rho(\mf{a})\,d\mf{a},$ where $\rho(\mf{a}) = \mc{N}(\mf{a};0, \Sigma)$ for some covariance matrix $\Sigma \succcurlyeq 0$. This offers the advantage that the posterior distributions of $\mf{a}$ and $\mf{q}$ at any time $t$ will remain jointly Gaussian. To approach this problem in a more traditional control-theoretic way, we write the system \eqref{Eq: Discrete_Dynamics} as follows:
\bql\label{Eq: Modified_Dynamics}
d\mf{x} = (F\mf{x} + B\mf{u})\,dt + G\,d\mf{W} \qquad
d\mf{y} = H\mf{x} \,dt + d\mf{V},
\eql
where $\mf{x} = \begin{bmatrix}\mf{q} \\ \mf{a} \end{bmatrix} \in \R^{2d}$, $F = \begin{bmatrix} \mf{0}_{d\times d} & \mf{I}_{d\times d} \\ \mf{0}_{d\times d} & \mf{0}_{d\times d} \end{bmatrix}$, $G = B = \begin{bmatrix} \mf{I}_{d\times d} \\ \mf{0}_{d\times d} \end{bmatrix}$, and $H = \begin{bmatrix} \mf{I}_{d\times d} & \mf{0}_{d\times d} \end{bmatrix}$. 
The cost \eqref{Eq: Cost_Bayesian} can equivalently be written as 
\bql \label{Eq: Cost_modified} \mc{J}(\mf{x},\mf{u}) = \E\left[\;
  \int\limits_{T_0}^T (\mf{x}^T \tilde{Q} \mf{x} + \mf{u}^T R \mf{u})\,dt
\right],\eql
with $\tilde{Q} = \begin{bmatrix} Q & 0_{d\times d}\\ 0_{d\times d} & 0_{d\times
    d} \end{bmatrix}$. Now the problem of minimizing \eqref{Eq: Cost_modified}
subject to the dynamics \eqref{Eq: Modified_Dynamics} becomes a standard
linear-quadratic-Gaussian optimal control problem (see, for instance, \cite{SpeyerChap9}).
Let $\hat{\mf{x}}(t) = \begin{bmatrix} \hat{\mf{q}}(t) \\ \hat{\mf{a}}(t)\end{bmatrix} = \E \left[ \mf{x}(t) | \mf{x}(0), \mf{y}(\tau): \tau\in[0,t)\right]$ be the Bayesian estimates of $\mf{q}(t)$ and $\mf{a}$ at time $t$. Then $\hat{\mf{q}}(t)$ and $\hat{\mf{a}}(t)$ satisfy the continuous-time Kalman filter equations
\begin{equation}\label{Eq: Estimation_ODE}
\begin{aligned}
d\hat{\mf{q}}(t) &= \left(\hat{\mf{a}} + \mf{u}\right)dt + P_{11}(t)\Sigma_V^{-1}\left(d\mf{y} - \hat{\mf{q}}dt\right)\\
d\hat{\mf{a}}(t) &= P_{12}^T(t)\Sigma_V^{-1}\left(d\mf{y} - \hat{\mf{q}}\,dt\right)
\end{aligned}
\end{equation}
%
%
with $P(t) = \begin{bmatrix} P_{11} & P_{12}\\ P_{12}^T & P_{22} \end{bmatrix} = \E[(\mf{x}(t) - \hat{\mf{x}}(t))(\mf{x}(t) - \hat{\mf{x}}(t))^T]$ given by the Riccati ODEs
\begin{align}\label{Eq: Estimation_Riccati}
\dot{P}(t) = FP + PF^T -PH^T\Sigma_V^{-1}HP + G\Sigma_WG^T,\quad P(0) = \begin{bmatrix} \Sigma_{\mf{q}_0} & 0 \\0 & \Sigma \end{bmatrix}.
\end{align}
The optimal cost to go at time $t$ is 
\begin{equation}
\begin{aligned}
\mc{J}(t, \mf{q}, \mf{y}; \mu)
&= \E \left[\int\limits_t^T(\mf{x}^T\tilde{Q}\mf{x} + \mf{u}^TR\mf{u})d\tau\right]\\
&= \E \left[\int\limits_t^T(\hat{\mf{q}}^TQ\hat{\mf{q}} + \mf{u}^TR\mf{u})d\tau\right] + \int\limits_t^T\operatorname{Trace}(P(\tau)\tilde{Q})d\tau.
\end{aligned}
\end{equation}
The optimal value of $\mc{J}$ solves the Hamilton-Jacobi-Bellman equation and takes the form
\bql
\mc{J}(t) = \hat{\mf{x}}^TS(t)\hat{\mf{x}} + \alpha(t) + \int\limits_t^T\operatorname{Trace}(P(\tau)\tilde{Q})d\tau.
\eql

In turn, the optimal control is given by
\bql \label{Eq: Optimal_Control}
\mf{u}^*(t) = R^{-1}B^TS(t)\hat{\mf{x}}(t) = -R^{-1}(S_{11}\hat{\mf{q}}(t) + S_{12}\hat{\mf{a}}(t)).
\eql

The matrix $S = \begin{bmatrix} S_{11} & S_{12}\\ S_{12}^T & S_{22} \end{bmatrix}$ and scalar-valued function $\alpha$ satisfy the control Riccati equations
\begin{equation}\label{Eq: Control_Riccati}
\begin{aligned}
-\dot{S} &= SF + F^TS + \tilde{Q} - SBR^{-1}B^TS, \quad S(T) = 0,\\
-\Dot{\alpha} &= \operatorname{Trace}\left(PH^T\Sigma_V^{-1}HPS\right),\quad \alpha(T) = 0.
\end{aligned}
\end{equation}
After some algebraic manipulation, the optimal cost-to-go at time $t$ becomes
\begin{equation}
\begin{aligned}
    \mc{J}(t, \mf{q}, \mf{y}; \mu) = (\hat{\mf{q}}^TS_{11}\hat{\mf{q}} + \hat{\mf{a}}^TS_{22}\hat{\mf{a}} + 2\hat{\mf{q}}^TS_{12}\hat{\mf{a}}) + \operatorname{Trace}(P(t)S(t)) \\+ \operatorname{Trace}\Bigg[\int\limits_t^T G\Sigma_WG^TS(t) + S(t)BR^{-1}B^TP(t) d\tau\Bigg].
\end{aligned}
\end{equation}
\section{Towards Agnostic Control: Performance of the Bayesian Strategy} \label{Sec: Agnostic}
 In Section \ref{Section: Constant_Regret_Theorem}, it is established that a
Bayesian strategy that has constant regret independent of ${\mf a}$ will minimize worst-case regret. To obtain such a strategy, the performance of a general Bayesian strategy needs to be quantified for each fixed true value of the parameter ${\bf a}$. The total cost incurred at ${\bf a}$ by the Bayesian strategy arising from a prior $\mu$ is 
\bql \label{Eq: Cost_strategy}
\mc{J}(\mf{q}, \mf{a}; \mu) = \E\left[\;\int\limits_{T_0}^T \left( \mf{q}^T Q\mf{q} + \mf{u}^T R\mf{u}\right)dt\right],
\eql
where $\mf{u} = R^{-1}B^tS\hat{\mf{x}} = R^{-1} (S_{11}\hat{\mf{q}} + S_{12}\hat{\mf{a}})$. With this control, the stochastic differential equations describing $\mf{q}$, $\hat{\mf{q}}$, and $\mf{a}$ become
\begin{align}\label{Eq: SDE_Strategy}
d\mf{q} &= \left[\mf{a} - R^{-1} (S_{11}\hat{\mf{q}} + S_{12}\hat{\mf{a}})\right] dt + d\mf{W},\\\nonumber
d\hat{\mf{q}} &= \left[ \hat{\mf{a}} - R^{-1} (S_{11}\hat{\mf{q}} + S_{12}\hat{\mf{a}})\right]dt + P_{11}\Sigma_V^{-1}\left[(\mf{q} - \hat{\mf{q}})dt +d\mf{V}\right],\\\nonumber
d\hat{\mf{a}} &= P_{12}\Sigma_V^{-1}\left[(\mf{q} - \hat{\mf{q}})dt +d\mf{V}\right]
\end{align}

Let $\Bar{\mf{a}}(t) \triangleq \E[\hat{\mf{a}}(t)]$, $\Bar{\mf{q}}(t) \triangleq \E[\hat{\mf{q}}(t)]$, and $\check{\mf{q}}(t) \triangleq \E[\mf{q}(t)]$ with the associated covariances $\Sigma_{\hat{\mf{a}}\hat{\mf{a}}}$, $\Sigma_{\hat{\mf{q}}\hat{\mf{q}}}$, $\Sigma_{\hat{\mf{q}}\hat{\mf{a}}}$, $\Sigma_{\mf{q}\mf{q}}$, $\Sigma_{\mf{q}\hat{\mf{q}}}$, and $\Sigma_{\mf{q}\hat{\mf{a}}}$. The cost \eqref{Eq: Cost_strategy} can be written as 

\begin{equation} \label{Eq: Cost_strategy_breakdown}
\begin{aligned}
\mc{J}(\mf{q}, \mf{a}; \mu)
&= \E\left[\;\int\limits_{T_0}^T \left( \mf{q}^T Q\mf{q} + \mf{u}^T R\mf{u}\right)dt\right]
= \int\limits_{T_0}^T\left[ \check{\mf{q}}^TQ\check{\mf{q}} + (S_{11}\bar{\mf{q}} + S_{12}\bar{\mf{a}})^TR^{-1}(S_{11}\bar{\mf{q}} + S_{12}\bar{\mf{a}})\right]dt \\
& + \int\limits_{T_0}^T\operatorname{Trace}\bigg(Q\Sigma_{\mf{q}\hat{\mf{q}}} + S_{11}^TR^{-1}S_{11}\Sigma_{\hat{\mf{q}}\hat{\mf{q}}} 
 + 2S_{11}^TR^{-1}S_{12}\Sigma_{\hat{\mf{q}}\hat{\mf{a}}} + S_{12}^TR^{-1}S_{12}\Sigma_{\hat{\mf{a}}\hat{\mf{a}}}  \bigg)dt.
\end{aligned}
\end{equation}

\begin{theorem}\label{Thm: Cost_of_a_strategy}
The cost incurred by a specific Bayesian strategy can be expressed as 
\bql \mc{J}(\mf{q}, \mf{a}; \mu) = \mf{a}^TX(\Sigma, \Sigma_V, T_0, T) \mf{a} + Y(\Sigma, \Sigma_V, T_0, T), \text{ where}
\eql
\beq
\begin{aligned}
X(\Sigma, \Sigma_V, T_0, T) &= \int\limits_{T_0}^T\bigg[ C_1^TQC_1 + (S_{11}C_2 + S_{12}C_3)^TR^{-1}(S_{11}C_2 + S_{12}C_3)\bigg]dt\text{ and}\\
Y(\Sigma, \Sigma_V, T_0, T) &= \int\limits_{T_0}^T\operatorname{Trace}\bigg(Q\Sigma_{\mf{q}\mf{q}} + S_{11}^TR^{-1}S_{11}\Sigma_{\hat{\mf{q}}\hat{\mf{q}}} + 2S_{11}^TR^{-1}S_{12}\Sigma_{\hat{\mf{q}}\hat{\mf{a}}} + S_{12}^TR^{-1}S_{12}\Sigma_{\hat{\mf{a}}\hat{\mf{a}}}  \bigg)dt
\end{aligned}
\eeq
for time-varying functions $C_1(t), C_2(t),$ and $C_3(t)$. All of the quantities appearing here can be found by solving the following system of ODEs:
\begin{equation}\label{Eq: ODE_Strategy}
\begin{aligned}
\dot{C_1} &= I - R^{-1}(S_{11}C_2 - S_{12}C_3)\\
\dot{C_2} &= C_3 - R^{-1}(S_{11}C_2 - S_{12}C_3) + P_{11}\Sigma_V^{-1}(C_1-C_2)\\
\dot{C_3} &= P_{12}\Sigma_V^{-1}(C_1 - C_2)\\
\dot{\Sigma}_{\hat{\mf{a}}\hat{\mf{a}}} &= (\Sigma_{\mf{q}\hat{\mf{a}}} - \Sigma_{\hat{\mf{q}}\hat{\mf{a}}})^T\Sigma_V^{-1}P_{12} + P_{12}\Sigma_V^{-1}(\Sigma_{\mf{q}\hat{\mf{a}}} - \Sigma_{\hat{\mf{q}}\hat{\mf{a}}}) + P_{12}\Sigma_V^{-1}P_{12} \\
\dot{\Sigma}_{\mf{q}\mf{q}} &= \Sigma_W - (\Sigma_{\mf{q}\hat{\mf{q}}}S_{11} + \Sigma_{\mf{q}\hat{\mf{a}}}S_{12}^T)R^{-1} - R^{-1}(\Sigma_{\mf{q}\hat{\mf{q}}}S_{11} + \Sigma_{\mf{q}\hat{\mf{a}}}S_{12}^T)^T\\
\dot{\Sigma}_{\mf{q}\hat{\mf{q}}}&= (\Sigma_{\mf{q}\mf{q}} - \Sigma_{\mf{q}\hat{\mf{q}}})\Sigma_V^{-1}P_{11} - \Sigma_{\mf{q}\hat{\mf{q}}}S_{11}^TR^{-1} +\Sigma_{\mf{q}\hat{\mf{a}}}(I - R^{-1}S_{12})^T - R^{-1}(S_{11}\Sigma_{\hat{\mf{q}}\hat{\mf{q}}} + S_{12}\Sigma_{\hat{\mf{q}}\hat{\mf{a}}}^T)\\
\dot{\Sigma}_{\mf{q}\hat{\mf{a}}} &= (\Sigma_{\mf{q}\mf{q}} - \Sigma_{\mf{q}\hat{\mf{q}}})\Sigma_V^{-1}P_{12} - R^{-1}(S_{11}\Sigma_{\hat{\mf{q}}\hat{\mf{a}}} + S_{12}\Sigma_{\hat{\mf{a}}\hat{\mf{a}}}^T)\\
\dot{\Sigma}_{\hat{\mf{q}}\hat{\mf{q}}}&= P_{11}^T\Sigma_V^{-1}P_{11} + \Sigma_{\hat{\mf{q}}\hat{\mf{a}}}(I -R^{-1}S_{12})^T + (I -R^{-1}S_{12})\Sigma_{\hat{\mf{q}}\hat{\mf{a}}}^T + (\Sigma_{\mf{q}\hat{\mf{q}}} - \Sigma_{\hat{\mf{q}}\hat{\mf{q}}})^T\Sigma_V^{-1}P_{11}\\ & + P_{11}\Sigma_V^{-1}(\Sigma_{\mf{q}\hat{\mf{q}}} - \Sigma_{\hat{\mf{q}}\hat{\mf{q}}}) - \Sigma_{\hat{\mf{q}}\hat{\mf{q}}}S_{11}R^{-1} - R^{-1}S_{11}\Sigma_{\hat{\mf{q}}\hat{\mf{q}}}\\
\dot{\Sigma}_{\hat{\mf{q}}\hat{\mf{a}}}&= (\Sigma_{\mf{q}\hat{\mf{q}}} - \Sigma_{\hat{\mf{q}}\hat{\mf{q}}})^T\Sigma_V^{-1}P_{12} + (I - R^{-1}S_{12})\Sigma_{\hat{\mf{a}}\hat{\mf{a}}}  - R^{-1}S_{12}\Sigma_{\hat{\mf{q}}\hat{\mf{a}}} + P_{11}\Sigma_V^{-1}(\Sigma_{\mf{q}\hat{\mf{a}}} - \Sigma_{\hat{\mf{q}}\hat{\mf{a}}} + P_{12})\\
\dot{\Sigma}_{\hat{\mf{a}}\hat{\mf{a}}}&= (\Sigma_{\mf{q}\hat{\mf{a}}} - \Sigma_{\hat{\mf{q}}\hat{\mf{a}}})^T\Sigma_V^{-1}P_{12} + P_{12}\Sigma_V^{-1}(\Sigma_{\mf{q}\hat{\mf{a}}} - \Sigma_{\hat{\mf{q}}\hat{\mf{a}}}) + P_{12}\Sigma_V^{-1}P_{12}.
\end{aligned}
\end{equation}
\end{theorem}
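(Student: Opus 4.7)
The plan is to exploit the linearity of the closed-loop SDE system \eqref{Eq: SDE_Strategy} in order to split the cost \eqref{Eq: Cost_strategy_breakdown} into a piece that is quadratic in the true drift $\mf{a}$ and a piece independent of $\mf{a}$. The quadratic piece will come from the means $\check{\mf{q}}$, $\bar{\mf{q}}$, $\bar{\mf{a}}$, which will turn out to be linear in $\mf{a}$; the independent piece will come from the centered second moments, which will turn out not to depend on $\mf{a}$ at all. The key point is that the gains $S_{11}(t), S_{12}(t), P_{11}(t), P_{12}(t)$ entering \eqref{Eq: SDE_Strategy} are deterministic functions produced by the Riccati equations \eqref{Eq: Estimation_Riccati} and \eqref{Eq: Control_Riccati}, and so do not depend on the realized true value of $\mf{a}$.

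First, I would take expectations of \eqref{Eq: SDE_Strategy} with $\mf{a}$ held fixed. The resulting ODE system for $\check{\mf{q}}, \bar{\mf{q}}, \bar{\mf{a}}$ is linear in the means with an affine forcing equal to $\mf{a}$, and all initial means at $t=0$ vanish (the prior is centered and $\mf{q}(0)$ is zero-mean). Hence the solution at any $t$ must be a linear function of $\mf{a}$, so there exist matrix-valued functions $C_1, C_2, C_3$ with $\check{\mf{q}}(t) = C_1(t)\mf{a}$, $\bar{\mf{q}}(t) = C_2(t)\mf{a}$, $\bar{\mf{a}}(t) = C_3(t)\mf{a}$. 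Substituting this ansatz into the mean ODEs and matching coefficients of $\mf{a}$ yields the first three equations of \eqref{Eq: ODE_Strategy}.

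Next, I would derive the covariance ODEs by introducing the centered processes $\tilde{\mf{q}} = \mf{q} - \check{\mf{q}}$, $\tilde{\mf{q}}_h = \hat{\mf{q}} - \bar{\mf{q}}$, $\tilde{\mf{a}}_h = \hat{\mf{a}} - \bar{\mf{a}}$. Subtracting the mean ODEs from \eqref{Eq: SDE_Strategy} shows that these centered processes satisfy linear SDEs driven by $d\mf{W}$ and $d\mf{V}$ in which the true drift $\mf{a}$ has cancelled out; this is the mechanism by which all six covariance matrices become independent of $\mf{a}$. Applying Itô's product rule to $\tilde{\mf{q}}\tilde{\mf{q}}^T$, $\tilde{\mf{q}}\tilde{\mf{q}}_h^T$, $\tilde{\mf{q}}\tilde{\mf{a}}_h^T$, $\tilde{\mf{q}}_h\tilde{\mf{q}}_h^T$, $\tilde{\mf{q}}_h\tilde{\mf{a}}_h^T$, $\tilde{\mf{a}}_h\tilde{\mf{a}}_h^T$, then taking expectations, produces the remaining six ODEs in \eqref{Eq: ODE_Strategy}; the quadratic-variation terms $P_{11}\Sigma_V^{-1}P_{11}$, $P_{11}\Sigma_V^{-1}P_{12}$, and $P_{12}\Sigma_V^{-1}P_{12}$ arise from the Itô bracket of the innovation $d\mf{V}$ with itself when it multiplies the Kalman gains.

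Finally, substituting $\check{\mf{q}} = C_1 \mf{a}$, $\bar{\mf{q}} = C_2 \mf{a}$, $\bar{\mf{a}} = C_3 \mf{a}$ into the first line of \eqref{Eq: Cost_strategy_breakdown} collects out to $\mf{a}^T X \mf{a}$ with $X$ as stated, while the trace integral in the second line, being built from the $\mf{a}$-independent covariances and deterministic gains, defines $Y$. I expect the main obstacle to be the sheer Itô bookkeeping for the nine coupled covariance ODEs, in particular tracking the cross terms when the innovation $(\mf{q}-\hat{\mf{q}})dt + d\mf{V}$ is premultiplied by both $P_{11}\Sigma_V^{-1}$ and $P_{12}\Sigma_V^{-1}$ simultaneously in the $\hat{\mf{q}}$ and $\hat{\mf{a}}$ equations. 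A secondary subtlety is the initial condition at $T_0$: during $[0,T_0]$ the system evolves passively under the Kalman filter, so one must check that the means at $t=T_0$ are themselves linear in $\mf{a}$ (they are, by the same argument as above applied to $\mf{u}\equiv 0$) and that the covariances at $t=T_0$ are $\mf{a}$-independent, so the ansatz propagates cleanly into the controlled phase.
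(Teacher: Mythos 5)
Your proposal is correct and follows essentially the same route as the paper: decompose the cost into a mean part (linear in $\mf{a}$ with zero initial data, giving $\check{\mf{q}}=C_1\mf{a}$, $\bar{\mf{q}}=C_2\mf{a}$, $\bar{\mf{a}}=C_3\mf{a}$) and an $\mf{a}$-independent covariance part, then derive the moment ODEs from the closed-loop SDEs. The only cosmetic difference is that the paper obtains the covariance ODEs by discretizing with step $\Delta t$ and passing to the limit rather than by invoking It\^o's product rule directly, which is the same computation.
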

\begin{proof}
The proof is given in Appendix \ref{App: Cost_of_a_strategy}.
\end{proof}

\subsection{Optimal Cost for a Bayesian Strategy with Known $\mf{a}$}
If $\mf{a}$ is known, then $\hat{\mf{a}}(t) \equiv \bar{\mf{a}}(t) \equiv \mf{a}$, $\Sigma_{\hat{\mf{a}}\hat{\mf{a}}}(t)\equiv 0$, $\Sigma_{\hat{\mf{q}}\hat{\mf{a}}}(t)\equiv 0$, and $\Sigma_{{\mf{q}}\hat{\mf{a}}}(t)\equiv 0$. We use similar notation as above, denoting all of the analogous quantities for this case with a superscript $*$. Hence the Riccati equation \eqref{Eq: Estimation_Riccati} becomes 
\bql \label{Eq: Estimation_Riccati_known_a}
\Dot{P}^*_{11} = - P^{*^T}_{11}\Sigma_V^{-1}P^*_{11} + \Sigma_W,\,P^*_{11}(0) = \Sigma_{\mf{q}_0},
\eql 
and $P^*_{12}(t)=P^*_{22}(t)\equiv 0$. The estimation equation \eqref{Eq: Estimation_ODE} becomes
\bql\label{Eq: Estimation_ODE_known_a}
d\hat{\mf{q}}^*(t) = \left(\mf{a} + \mf{u}\right) dt + P^*_{11}(t)\Sigma_V^{-1}\left(d\mf{y} - \hat{\mf{q}}^*dt\right).
\eql
This, in turn, yields the optimal control $\mf{u}^*(t) = -R^{-1}(S_{11}\hat{\mf{q}}^*(t) + S_{12}{\mf{a}}(t))$.
\begin{theorem} \label{Thm: Cost_for_known_a}
The optimal cost for the Bayesian strategy with known $\mf{a}$ can be expressed as 
\bql\label{Eq: Cost_known_a} J(\mf{q}, \mf{a}) =  \mf{a}^TX^*(\Sigma_V, T_0, T) \mf{a} + Y^*(\Sigma_V, T_0, T),\eql 
\begin{equation*}
\begin{aligned}
\text{where} \quad X^*(\Sigma_V, T_0, T) &= \int\limits_{T_0}^T\bigg[C_1^{*^T}QC^*_1 + (S_{11}C^*_2 + S_{12})^TR^{-1}(S_{11}C^*_2 + S_{12})\bigg]dt\\
\text{and}\quad Y^*(\Sigma_V, T_0, T) &= \int\limits_{T_0}^T\operatorname{Trace}\bigg(Q\Sigma_{\mf{q}\mf{q}}^* + S_{11}^TR^{-1}S_{11}\Sigma_{\hat{\mf{q}}\hat{\mf{q}}}^*\bigg)dt,
\end{aligned}
\end{equation*}
with the time-varying functions $C_1^*(t), C_2^*(t), \Sigma_{\mf{qq}}^*$, and $\Sigma_{\mf{\hat{q}\hat{q}}}^*$ described by the ODEs
\begin{equation}\label{Eq: ODE_Strategy_Known_a}
\begin{aligned}
\dot{C_1}^* &= I - R^{-1}(S_{11}{C_2}^* - S_{12})\\
\dot{C_2}^* &= I - R^{-1}(S_{11}{C_2}^* - S_{12}) + P_{11}\Sigma_V^{-1}(C_1^* - C_2^*)\\
\dot{\Sigma}_{\mf{q}\mf{q}}^* &= \Sigma_W - (\Sigma_{\mf{q}\hat{\mf{q}}}^*S_{11}R^{-1} + R^{-1}S_{11}\Sigma_{\mf{q}\hat{\mf{q}}}^{*^T})\\
\dot{\Sigma}_{\mf{q}\hat{\mf{q}}}^*&= (\Sigma_{\mf{q}\mf{q}}^* - \Sigma_{\mf{q}\hat{\mf{q}}}^*)\Sigma_V^{-1}P_{11} - \Sigma_{\mf{q}\hat{\mf{q}}}^*S_{11}^TR^{-1} - R^{-1}S_{11}\Sigma_{\hat{\mf{q}}\hat{\mf{q}}}^* \\
\dot{\Sigma}_{\hat{\mf{q}}\hat{\mf{q}}}^*&= P_{11}^{*^T}\Sigma_V^{-1}P_{11} + (\Sigma_{\mf{q}\hat{\mf{q}}}^* - \Sigma_{\hat{\mf{q}}\hat{\mf{q}}}^*)^T\Sigma_V^{-1}P_{11}^* + P_{11}^*\Sigma_V^{-1}(\Sigma_{\mf{q}\hat{\mf{q}}}^* - \Sigma_{\hat{\mf{q}}\hat{\mf{q}}}^*) \\ &- \Sigma_{\hat{\mf{q}}\hat{\mf{q}}}^*S_{11}R^{-1} - R^{-1}S_{11}\Sigma_{\hat{\mf{q}}\hat{\mf{q}}}^*.
\end{aligned}
\end{equation}
\end{theorem}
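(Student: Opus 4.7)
The plan is to obtain Theorem~\ref{Thm: Cost_for_known_a} as a direct specialization of Theorem~\ref{Thm: Cost_of_a_strategy} to the degenerate case in which the posterior distribution of $\mf{a}$ is a point mass at the true value. Once the simplifications $\hat{\mf{a}}(t)\equiv\bar{\mf{a}}(t)\equiv \mf{a}$, $\Sigma_{\hat{\mf{a}}\hat{\mf{a}}}\equiv\Sigma_{\hat{\mf{q}}\hat{\mf{a}}}\equiv\Sigma_{\mf{q}\hat{\mf{a}}}\equiv 0$, and $P_{12}\equiv P_{22}\equiv 0$ (which reduces the Riccati ODE \eqref{Eq: Estimation_Riccati} to the scalar-block form \eqref{Eq: Estimation_Riccati_known_a}) are justified, the desired formulas follow by substitution into the cost decomposition \eqref{Eq: Cost_strategy_breakdown} and into the ODE system \eqref{Eq: ODE_Strategy}.

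First I would argue that in the known-$\mf{a}$ case, the SDEs \eqref{Eq: SDE_Strategy} reduce to a closed linear system in $(\mf{q},\hat{\mf{q}}^*)$ with an inhomogeneous drift proportional to $\mf{a}$ and zero-mean additive noise. Taking expectations and using the zero initial conditions, the means $\check{\mf{q}}(t)=\E[\mf{q}(t)]$ and $\bar{\mf{q}}(t)=\E[\hat{\mf{q}}^*(t)]$ are linear functions of $\mf{a}$, so there exist matrix-valued functions $C_1^*(t),C_2^*(t)$ with $\check{\mf{q}}(t)=C_1^*(t)\mf{a}$ and $\bar{\mf{q}}(t)=C_2^*(t)\mf{a}$. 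Substituting these ansatze into the mean-dynamics ODEs obtained from \eqref{Eq: Estimation_ODE_known_a} and canceling the factor $\mf{a}$ produces the first two ODEs of \eqref{Eq: ODE_Strategy_Known_a}; equivalently, these are the $C_1$ and $C_2$ equations of \eqref{Eq: ODE_Strategy} after setting $C_3\equiv I$ (since $\bar{\mf{a}}\equiv\mf{a}$ forces the coefficient of $\mf{a}$ in $\bar{\mf{a}}$ to be the identity).

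Next I would derive the three covariance ODEs by applying Itô's formula to the matrix products $(\mf{q}-\check{\mf{q}})(\mf{q}-\check{\mf{q}})^T$, $(\mf{q}-\check{\mf{q}})(\hat{\mf{q}}^*-\bar{\mf{q}})^T$, and $(\hat{\mf{q}}^*-\bar{\mf{q}})(\hat{\mf{q}}^*-\bar{\mf{q}})^T$, using the joint SDE for $(\mf{q},\hat{\mf{q}}^*)$ and collecting drift and cross-variation terms from the shared noise sources $d\mf{W}$ and $d\mf{V}$. These computations are exactly those already carried out in the proof of Theorem~\ref{Thm: Cost_of_a_strategy}, with the terms involving $P_{12}$, $\Sigma_{\hat{\mf{a}}\hat{\mf{a}}}$, $\Sigma_{\hat{\mf{q}}\hat{\mf{a}}}$, and $\Sigma_{\mf{q}\hat{\mf{a}}}$ dropped. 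Finally, substituting $\check{\mf{q}}=C_1^*\mf{a}$, $\bar{\mf{q}}=C_2^*\mf{a}$, and $\bar{\mf{a}}\equiv\mf{a}$ into \eqref{Eq: Cost_strategy_breakdown} factors the deterministic-mean integrand as $\mf{a}^T[C_1^{*T}QC_1^*+(S_{11}C_2^*+S_{12})^TR^{-1}(S_{11}C_2^*+S_{12})]\mf{a}$, giving $\mf{a}^T X^*\mf{a}$, while the trace integrand collapses to $\operatorname{Trace}(Q\Sigma_{\mf{qq}}^*+S_{11}^TR^{-1}S_{11}\Sigma_{\hat{\mf{q}}\hat{\mf{q}}}^*)$, which is independent of $\mf{a}$ and defines $Y^*$. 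The main obstacle is purely bookkeeping: verifying sign conventions and transposes when specializing each ODE of \eqref{Eq: ODE_Strategy}, and confirming that the simplification $\hat{\mf{a}}\equiv \mf{a}$ is consistent with the filter equations (which it is, since the prior variance of $\mf{a}$ is zero and the filter gain $P_{12}\Sigma_V^{-1}$ vanishes identically, leaving $\hat{\mf{a}}$ unchanged from its initial value).
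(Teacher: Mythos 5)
Your proposal is correct and follows essentially the same route as the paper: specialize the known-$\mf{a}$ case by setting $\hat{\mf{a}}\equiv\mf{a}$, $P_{12}\equiv P_{22}\equiv 0$, and the $\mf{a}$-related covariances to zero, observe that the means $\check{\mf{q}}^*$ and $\bar{\mf{q}}^*$ are linear in $\mf{a}$ (equivalently $C_3\equiv I$), and substitute into the cost decomposition \eqref{Eq: Cost_strategy_breakdown} to read off $X^*$ and $Y^*$. The only cosmetic difference is that you invoke It\^o's formula for the covariance ODEs where the paper uses the discretize-and-take-limits computation from the proof of Theorem \ref{Thm: Cost_of_a_strategy}; these are equivalent.
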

\begin{proof}
The proof is given in Appendix \ref{App: Cost_for_known_a}.
\end{proof}

\subsection{Minimizing Multiplicative Regret} \label{Sec: Mult_Regret}
\begin{figure*}[t]
\subfigure{%
\includegraphics[trim=0cm 0cm 0cm 0cm, clip=true, width=0.5\textwidth]{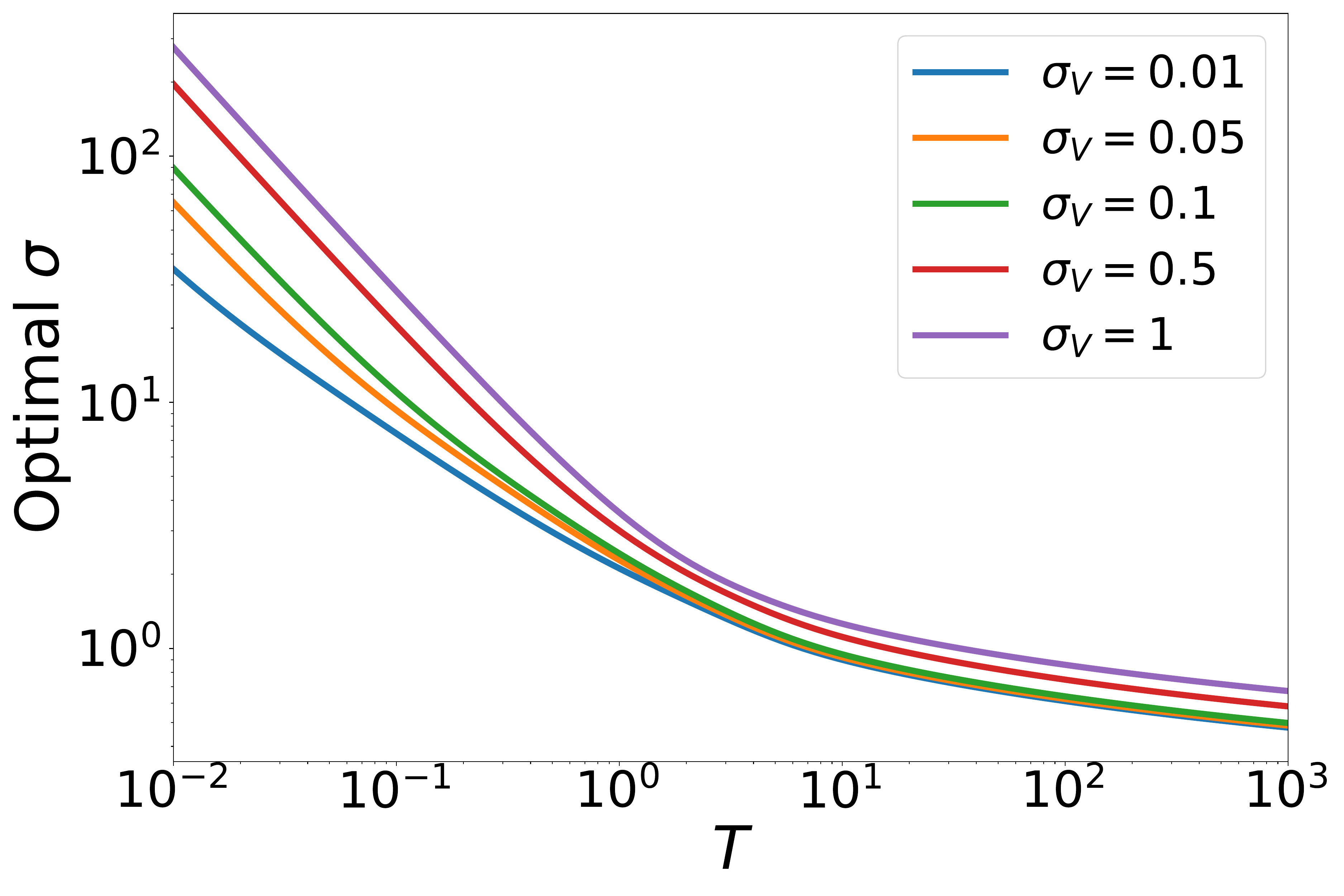}
}
\subfigure{%
\includegraphics[trim=0cm 0cm 0cm 0cm, clip=true, width=0.5\textwidth]{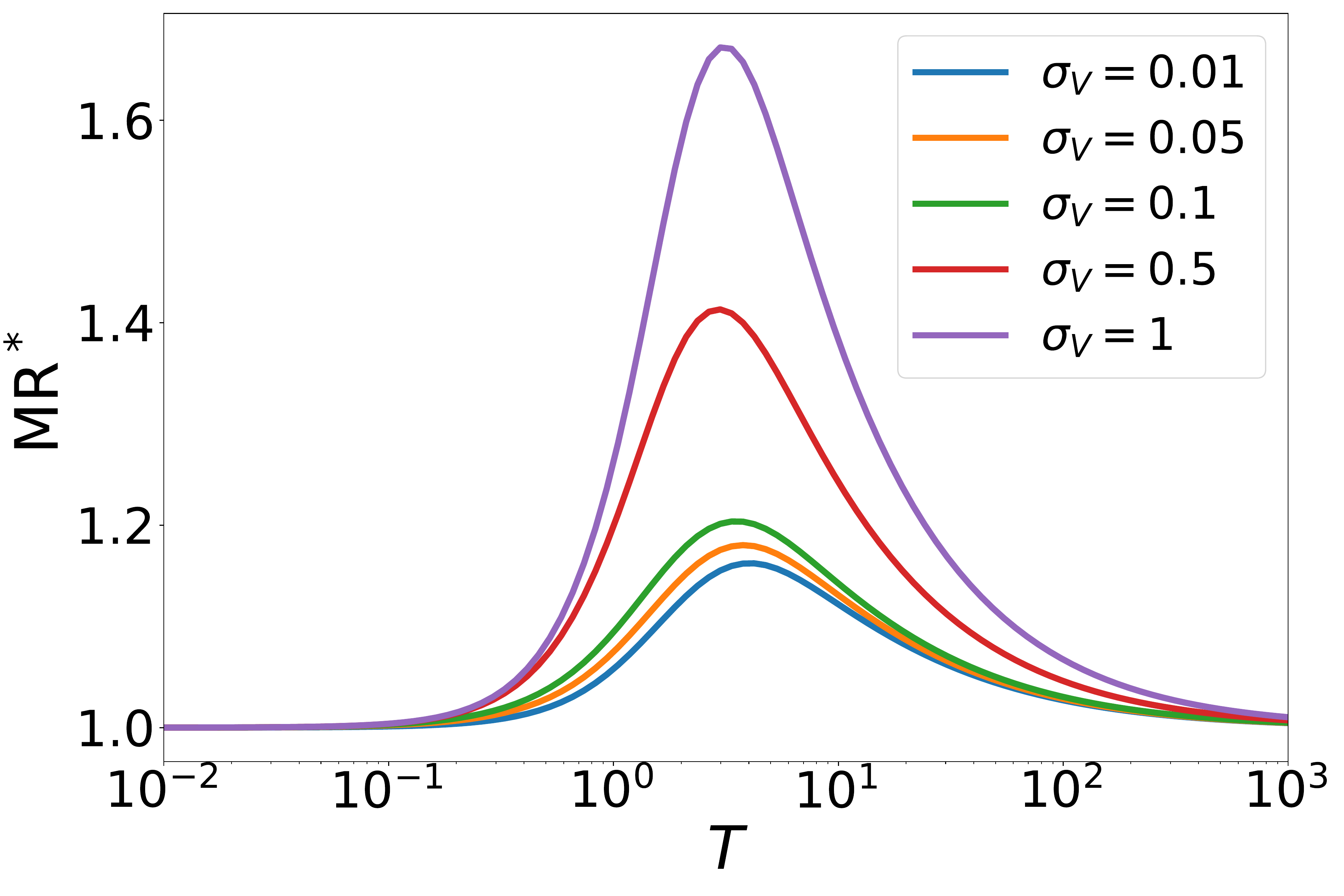}
}
\caption{Bayesian strategies minimizing the worst-case multiplicative regret with varying standard deviation $\sigma_V$ of the sensor noise. (a) Optimal standard deviation $\sigma$ of the prior; (b) worst-case multiplicative regret $\operatorname{MR}^*$. \label{Fig: MultiplicativeRegret}}
\end{figure*}

The optimal cost \eqref{Eq: Cost_known_a} with known $\mf{a}$ is 
$J (\mf{q}, \mf{a}) = \mf{a}^TX^*(\Sigma_V, T_0, T) \mf{a} + Y^*(\Sigma_V, T_0, T).$
Hence, the multiplicative regret for a Bayesian strategy with prior covariance $\Sigma$ is 
\bql\label{Eq: Multiplicative_Regret}
\operatorname{MR}(\mf{a}) = \dfrac{\mf{a}^TX(\Sigma, \Sigma_V, T_0, T) \mf{a} + Y(\Sigma, \Sigma_V, T_0, T)}{\mf{a}^TX^*(\Sigma_V, T_0, T)\mf{a} + Y^*(T, T_0, \Sigma_V)}.
\eql
We want to find $\Sigma$ such that \eqref{Eq: Multiplicative_Regret} becomes independent of $\mf{a}$. This occurs precisely when 
\bql \label{Eq: Matrix_Integral_Equation}
X(\Sigma, \Sigma_V, T_0, T) = \dfrac{Y(\Sigma, \Sigma_V, T_0, T)}{Y^*(\Sigma_V, T_0, T)}X^*(\Sigma_V, T_0, T).
\eql
Eq. \eqref{Eq: Matrix_Integral_Equation} is a matrix integral equation that can be solved for each $T$ numerically using a root-finding method. 
In fact, this equation has an intuitive interpretation. At optimality, the worst-case multiplicative regret is equal to both the multiplicative regret at $\mf{a} = 0$, given by $\frac{Y}{Y^*}$, and the multiplicative regret in the limit as $||\mf{a}||\to\infty$, which is given by the constant ratio of the $X$ and $X^*$ matrices. That is, the optimal agnostic strategy perfectly balances the regret for no drift and very large drift. Moreover, the equality of these multiplicative regrets is a sufficient condition for optimality.

For illustration, we identified the multiplicative regret-minimizing Bayesian strategy for a scalar system with dynamics \eqref{Eq: Continuous_Dynamics}, $\Sigma_W = 1$, $\Sigma_V = \sigma_V^2$, $Q=R=1$, and $T_0 = 0$. The optimal prior covariance $\Sigma = \sigma^2$ was computed by solving \eqref{Eq: Matrix_Integral_Equation} numerically using a Newton solver. The optimal prior standard deviation $\sigma$ and the worst-case multiplicative regret $\operatorname{MR}^*$ for different values of sensor noise standard deviation $\sigma_V$ are shown in Fig.~\ref{Fig: MultiplicativeRegret}. For small time horizons, the optimal value of $\sigma$ is quite large, reflecting the fact that there is very little time to learn the dynamics of the system and thus the prior needs to be flexible in order for the controller to adapt quickly. This effect is particularly pronounced when the sensor noise $\Sigma_V$ is large, which would otherwise cause the controller to learn the dynamics too slowly. With a large time horizon, the controller can act more conservatively as there is abundant time to stabilize the system and poor performance at small times is only weakly penalized. It can be shown that the optimal prior standard deviation $\sigma$ tends to zero as $T\rightarrow\infty$. As expected, the worst-case regret increases with the amount of sensor noise. It peaks at $T\approx 3.5$ and tends to unity as $T$ tends to $0$ or $\infty$.

\subsection{Minimizing Additive Regret}
When minimizing the additive regret, we must start at a nonzero time $T_0$ for
reasons to be explained shortly. The additive regret is given by
\bql
\operatorname{AR}(\mf{a}) = \mf{a}^T \bigg[X(\Sigma, \Sigma_V, T, T_0) - X^*(\Sigma_V, T, T_0)\bigg]\mf{a} + \bigg[Y(\Sigma, \Sigma_V, T, T_0) - Y^*(\Sigma_V, T, T_0)\bigg].
\eql
We want to find a prior covariance $\Sigma$ such that $\operatorname{AR}(\mf{a})$ is independent of $\mf{a}$. It turns out that this occurs for $\Sigma = rI$ in the limit $r \to +\infty$. 
\begin{theorem}\label{Thm: Regret_Inedep_a}
For $0\leq T_0 < T$, $\underset{\Sigma\rightarrow\infty}{\lim} \mf{a}^T\bigg[X(\Sigma, \Sigma_V, T, T_0) - X^*(\Sigma_V, T, T_0)\bigg]\mf{a} = 0$. Moreover, $\underset{\Sigma\rightarrow\infty}{\lim} Y(\Sigma, \Sigma_V, T, T_0) - Y^*(\Sigma_V, T, T_0) \to \infty$ no slower than logarithmically as $T_0\rightarrow 0$ for fixed~$T$.
\end{theorem}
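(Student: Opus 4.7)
The plan is to treat the two assertions separately, both leveraging how the filtering Riccati~\eqref{Eq: Estimation_Riccati} behaves as $\Sigma\to\infty$. A small-time expansion around $t=0$ gives $P_{12}(t)\approx\Sigma t$ and $P_{22}(t)\approx\Sigma$, after which the quenching term $-P_{12}^T\Sigma_V^{-1}P_{12}$ drives $P_{22}$ down; past a transient of length $O(\Sigma^{-1/3})$ the filter has essentially forgotten its diffuse prior and $P_{22}(t)$ locks onto a $\Sigma$-independent profile $\Theta(t^{-3})$ as $t\to 0^+$, which is the Cram\'er--Rao scaling for recovering a slope from integrated noisy position measurements. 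Simultaneously, $C_3(t)\to I$ pointwise on $(0,T]$ and $\Sigma_{\hat{\mf{a}}\hat{\mf{a}}}(t)\to P_{22}(t)$ in the unbiased MLE limit, as follows from the identity $P_{22}=\Sigma_{\hat{\mf{a}}\hat{\mf{a}}}+(I-C_3)\Sigma(I-C_3)^T$ derived via the law of total variance applied to $\mf{a}-\hat{\mf{a}}$.

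\emph{First assertion.} Observe that $\mf{a}^T X\mf{a}=\int_{T_0}^T[\check{\mf{q}}^T Q\check{\mf{q}}+\bar{\mf{u}}^T R\bar{\mf{u}}]\,dt$ is the deterministic portion of the Bayesian strategy's cost, while $\mf{a}^T X^*\mf{a}$ is the corresponding known-$\mf{a}$ quantity obtained by replacing $C_3$ with $I$ everywhere. Because $\dot C_3 = P_{12}\Sigma_V^{-1}(C_1-C_2)$ and $P_{12}$ has magnitude $O(\Sigma)$ near $t=0$, $C_3$ rises to within $o(1)$ of $I$ within the $O(\Sigma^{-1/3})$ transient; past this transient the first three ODEs in~\eqref{Eq: ODE_Strategy} reduce exactly to~\eqref{Eq: ODE_Strategy_Known_a}. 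Combining pointwise convergence $(C_1,C_2,C_3)\to(C_1^*,C_2^*,I)$ on $(0,T]$ with a $\Sigma$-uniform a~priori bound on these functions yields $\mf{a}^T(X-X^*)\mf{a}\to 0$ by dominated convergence, for every $T_0\ge 0$.

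\emph{Second assertion.} I would isolate the single term $\int_{T_0}^T\operatorname{Trace}(S_{12}^T R^{-1}S_{12}\,\Sigma_{\hat{\mf{a}}\hat{\mf{a}}})\,dt$ appearing in $Y$ but having no counterpart in $Y^*$, and show that it alone forces the at-least-logarithmic divergence. Continuity of the control Riccati~\eqref{Eq: Control_Riccati}, combined with $S_{12}(t)\approx\tfrac12 Q(T-t)^2$ near $t=T$ and $S_{12}$ nonvanishing for $t<T$, yields $\operatorname{Trace}(S_{12}^T R^{-1}S_{12})\ge c_1>0$ on some $[0,T_1]$ with $T_1<T$. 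A Fisher-information argument (equivalently, the MLE limit of the Bayesian posterior realized by the Kalman filter as $\Sigma\to\infty$) gives $\Sigma_{\hat{\mf{a}}\hat{\mf{a}}}(t)\ge c_2/t$ for small $t$. The remaining contributions to $Y-Y^*$ are controllable: $\operatorname{Trace}(Q(\Sigma_{\mf{q}\mf{q}}-\Sigma_{\mf{q}\mf{q}}^*))$ and $\operatorname{Trace}(S_{11}^T R^{-1}S_{11}(\Sigma_{\hat{\mf{q}}\hat{\mf{q}}}-\Sigma_{\hat{\mf{q}}\hat{\mf{q}}}^*))$ remain bounded on $[\epsilon,T]$ for any $\epsilon>0$ and contribute at most $O(1)$ after integration, while the Cauchy--Schwarz bound $\Sigma_{\hat{\mf{q}}\hat{\mf{a}}}^T\Sigma_{\hat{\mf{q}}\hat{\mf{a}}}\preccurlyeq\Sigma_{\hat{\mf{q}}\hat{\mf{q}}}\Sigma_{\hat{\mf{a}}\hat{\mf{a}}}$ together with bounded $\Sigma_{\hat{\mf{q}}\hat{\mf{q}}}$ keeps the cross-term integrand at $O(t^{-3/2})$, whose absolute-value integral $O(T_0^{-1/2})$ cannot cancel the $\log(1/T_0)$ divergence. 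Combining the pieces gives $Y-Y^*\ge c_1 c_2\log(T_1/T_0)-O(1)$ as $T_0\to 0^+$.

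\emph{Main obstacle.} The delicate step is in Part~1: rigorously interchanging the $\Sigma\to\infty$ limit with the time integral, despite the fact that several coupled quantities in~\eqref{Eq: ODE_Strategy} blow up near $t=0$ when $\Sigma$ is large. I would handle this with a two-scale argument---establishing uniform-in-$\Sigma$ bounds on the integrands on $[\epsilon,T]$ for each fixed $\epsilon>0$ (so that dominated convergence applies), and separately showing that the $[0,\epsilon]$ contribution to $X$ vanishes with $\epsilon$ uniformly in $\Sigma$ because the integrands there are bounded and the window is short. Part~2 is more routine once the Cram\'er--Rao lower bound on $\Sigma_{\hat{\mf{a}}\hat{\mf{a}}}$ is in hand, since everything else reduces to standard Kalman-filter estimates.
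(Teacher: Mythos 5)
Your Part 1 follows essentially the same route as the paper: the heart of the matter is showing $\bar{\mf{a}}\to\mf{a}$ and $\bar{\mf{q}}\to\check{\mf{q}}$ pointwise on $(0,T]$ as $\Sigma\to\infty$ (equivalently $C_3\to I$, $C_2\to C_1$), after which the mean dynamics collapse onto the known-$\mf{a}$ system. The paper gets this by solving the filter Riccati equation in closed form for the scalar case with unit parameters and then solving the resulting linear error ODE explicitly, which makes the pointwise convergence and the integrability needed for dominated convergence immediate; your two-scale transient argument is a plausible substitute but its quantitative claims (the $O(\Sigma^{-1/3})$ transient, the uniform-in-$\Sigma$ bounds on $C_1,C_2,C_3$) are asserted rather than derived, and your supporting identity $P_{22}=\Sigma_{\hat{\mf{a}}\hat{\mf{a}}}+(I-C_3)\Sigma(I-C_3)^T$ is not the law of total variance (that gives $\Sigma = P_{22}+\Sigma_{\hat{\mf{a}}\hat{\mf{a}}}+C_3\Sigma C_3^T$, with $\Sigma_{\hat{\mf{a}}\hat{\mf{a}}}$ on the other side). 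One further caution: $P_{11}$ does \emph{not} converge to $P_{11}^*$ in the diffuse limit, so the Bayesian ODEs never literally ``reduce exactly'' to \eqref{Eq: ODE_Strategy_Known_a}; the argument survives only because the discrepant term multiplies $C_1-C_2\to 0$, and you should say so.

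The genuine gap is in Part 2. With only the lower bound $\Sigma_{\hat{\mf{a}}\hat{\mf{a}}}\geq c_2/t$, your favorable term contributes $\gtrsim c_1c_2\log(T_1/T_0)$, but your own estimate on the cross term $2\operatorname{Trace}(S_{11}^TR^{-1}S_{12}\Sigma_{\hat{\mf{q}}\hat{\mf{a}}})$ is an \emph{absolute-value} bound of order $T_0^{-1/2}$, and this term has no definite sign. Since $T_0^{-1/2}\gg\log(1/T_0)$, the stated bounds are compatible with the cross term being negative and swamping the logarithm, so the conclusion $Y-Y^*\geq c_1c_2\log(T_1/T_0)-O(1)$ does not follow from what you have written. (The auxiliary claim that $\Sigma_{\hat{\mf{q}}\hat{\mf{q}}}$ stays bounded is also false in the diffuse limit: $\hat{\mf{q}}\approx t\hat{\mf{a}}$ forces $\Sigma_{\hat{\mf{q}}\hat{\mf{q}}}\sim t^2\Sigma_{\hat{\mf{a}}\hat{\mf{a}}}$, which blows up as $t\to 0$.) The paper sidesteps this entirely by observing that the trace integrand of $Y$ is $\operatorname{Trace}(Q\,\mathrm{Cov}(\mf{q}))+\operatorname{Trace}(R\,\mathrm{Cov}(\mf{u}))$, i.e.\ the last three terms assemble into the nonnegative quantity $\E\bigl[\|R^{-1/2}(S_{11}(\hat{\mf{q}}-\bar{\mf{q}})+S_{12}(\hat{\mf{a}}-\bar{\mf{a}}))\|^2\bigr]$, so no cancellation of the divergent piece is possible and it suffices that $Y^*$ stays bounded as $T_0\to 0$. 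To repair your argument you must either complete the square in this way, or upgrade your one-sided bound to the two-sided asymptotics $\Sigma_{\hat{\mf{a}}\hat{\mf{a}}}=\Theta(t^{-3})$, $\Sigma_{\hat{\mf{q}}\hat{\mf{q}}}=\Theta(t^{-1})$ so that the positive term dominates the cross term pointwise near $t=0$.
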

\begin{proof}
The proof for the scalar case $d=1$ is given in Appendix \ref{App: Additive_Regret}. The vector case is more involved but conceptually analogous.
\end{proof}

\begin{figure}[t]
\centering
\subfigure{\includegraphics[trim=0cm 0cm 0cm 0cm, clip=true, width=0.5\textwidth]{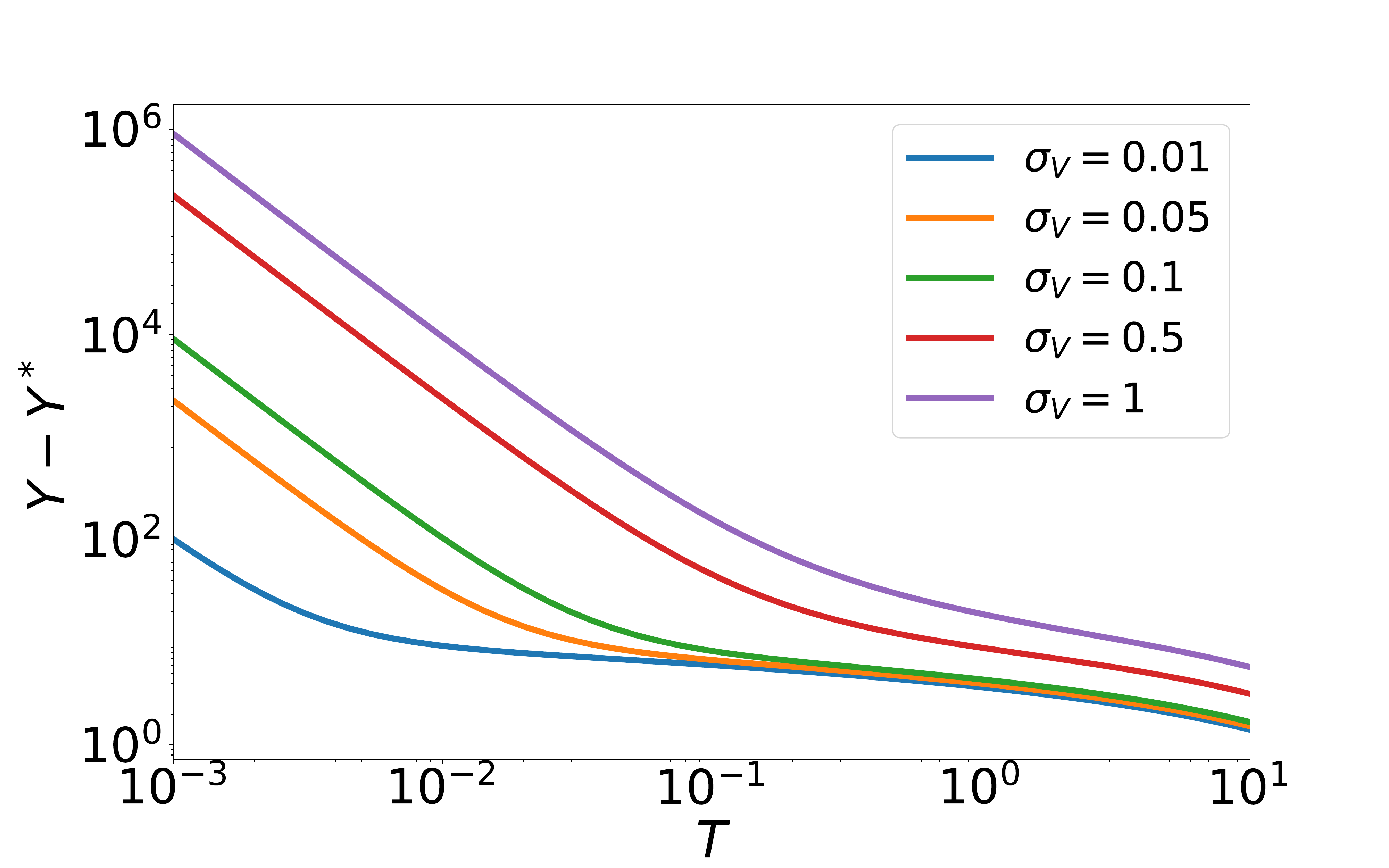}}
\subfigure{\includegraphics[trim=0cm 0cm 0cm 0cm, clip=true, width=0.5\textwidth]{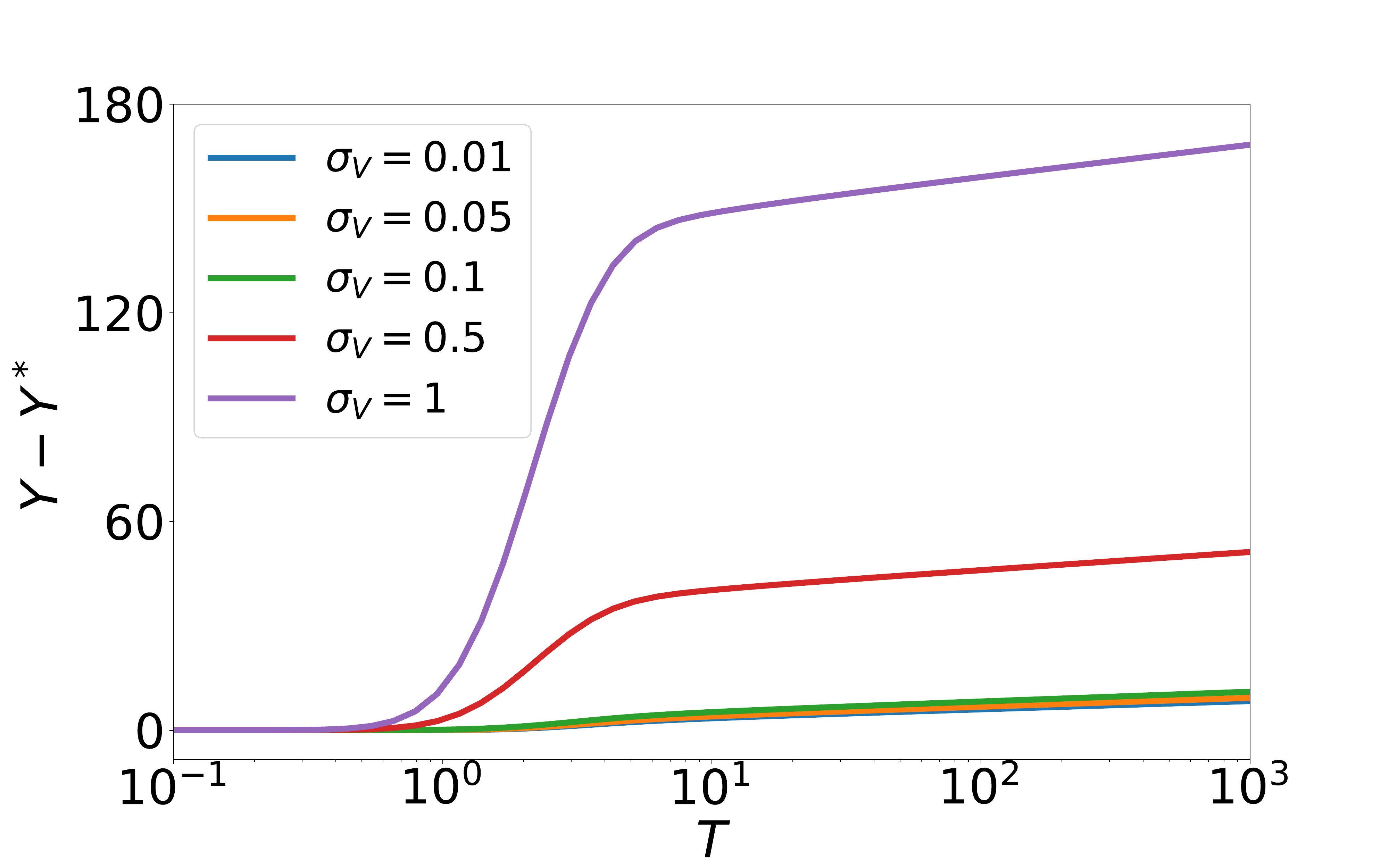}}
\caption{The optimal worst-case additive regret $Y-Y^*$ with
  varying standard deviation $\sigma_V$ of the sensor noise for (a) fixed $T =
  100$ and varying $T_0$ and (b) fixed $T_0 = 0.1$ and varying $T$. \label{Fig: AdditiveRegret}}
\end{figure}

Theorem \ref{Thm: Regret_Inedep_a} shows that the limit of Bayesian strategies with the prior covariance $\Sigma$ diverging to infinity optimizes the worst-case additive regret. Moreover, the additive regret in this limit is simply given by $\lim_{\Sigma\to\infty} Y(\Sigma, \Sigma_V, T, T_0) - Y^*(\Sigma_V, T, T_0)$. However, this quantity diverges as $T_0 \rightarrow 0$, so we must set $T_0>0$ to obtain a meaningful result. (Recall that we require that the control $\mf{u}(t)$ be set to zero for $t\in[0,T_0]$.) It remains to identify the specific strategy that arises as the limit of the Bayesian strategies. This requires knowledge of the optimal estimates $\mf{\hat{a}}$ and $\mf{\hat{q}}$ that appear in \eqref{Eq: Optimal_Control}. The challenge is that we cannot obtain these estimates directly from the ODEs \eqref{Eq: Estimation_ODE} as we did for the Bayesian strategies: these ODEs are singular in the limit as $\Sigma \to \infty$ and $t\to 0$. Instead, we can compute the optimal estimates from first principles. The result is
\begin{equation}
    \mf{\hat{a}}(t) = 2t\left(\int_0^t s^2 \omega_0(s)\,ds + t^2\kappa \right)^{-1}\left(\int_0^t \omega_0(s){\bf y}(s)\,ds+\kappa{\bf y}(t) \right), \quad \mf{\hat{q}}(t) = t\mf{\hat{a}}(t)
\end{equation}
where the matrices $\omega_0(s)$, $\kappa$ solve the system of integral equations
\begin{align}
    &\Sigma_V\omega_0(s)-\frac{\Sigma_W}{2T}\int_0^t \tau^2\omega_0(\tau)\,d\tau +\Sigma_W\int_s^t(\tau-t)\omega_0(\tau)\,d\tau = 
    I+\frac{(2s-t)}{2}\Sigma_W\kappa,\\\nonumber& \forall s \in [0, t) \text{ and }
    \int_0^t \omega_0(\tau)\,d\tau = -\kappa.
\end{align}
We can approximate the solution of the integral equations to arbitrary precision by discretizing them with respect to time and then solving the resulting matrix equation. Fig.~\ref{Fig: AdditiveRegret} shows plots of the additive regret for the regret-minimizing agnostic strategies corresponding to the scalar systems with parameters listed in Section \ref{Sec: Mult_Regret}. 
Panel (a) shows that the additive regret indeed diverges as $T_0 \to 0$ for fixed $T$, following a power law. Moreover, panel (b) indicates that the growth rate of the regret as $T \to \infty$ is logarithmic for fixed $T_0$.

\section{Conclusions}
\label{Sec: Conclusions}
We have identified optimal strategies for several variations of a control
problem based on a stochastic dynamical system with both process noise and sensor noise: the classical problem where the drift $\mf{a}$ is known, a Bayesian problem where the prior distribution of $\mf{a}$ is normal, and the agnostic problem where $\mf{a}$ is entirely unknown and we wish to minimize the worst-case multiplicative or additive regret. This last problem is of particular practical interest, as it provides valuable intuition about how a controller should simultaneously learn and control a system without prior knowledge of the parameters. In this case, we were able to identify the optimal agnostic strategy as a Bayesian strategy or limit of Bayesian strategies arising from a Gaussian prior. Qualitatively, our results show that a wide prior is necessary to minimize the worst-case regret on a short time horizon. In fact, the worst-case additive regret is minimized as the width of the prior diverges to infinity.

As a future research direction, it is worth exploring whether it is always possible to find a Bayesian strategy which is also an optimal or near-optimal agnostic strategy. For instance, \cite{carruth2021} treats the scalar system with the state dynamics $\Delta q = (aq+u)\Delta t+\Delta W,$. The aforementioned work provides further theoretical results supporting the hypothesis that optimal agnostic strategies for a wide variety of control problems naturally arise from Bayesian strategies.

\appendix
\section{Proof of Theorem \ref{Thm: Regret_Bounds}}\label{App: Regret_Bounds}
\begin{proof}
 The upper bounds are trivial, since the optimal worst-case regret cannot exceed the worst-case regret of any given strategy. It remains to prove the lower bounds. The expected costs of $\mc{A}$ and $\mc{B}$ under the prior $\mu$ are given by
\beq
  J_{\mc{A},\mu} = \int J_{\mc A}(\mf{a})\,d\mu(\mf{a}), \quad J_{\mc{B},\mu} = \int J_{\mc B}(\mf{a})\,d\mu(\mf{a}).
\eeq
These costs are related by the inequality $J_{\mc{A},\mu} \leq J_{\mc{B},\mu}$ by the Bayesian optimality of $\mc{A}$ under $\mu$. However, $J_{\mc{A}}(\mf{a}) = J_o(\mf{a})+\operatorname{AR}_{\mc{A}}(\mf{a})$ and similarly for $\mc{B}$, so
\beq
  \int \operatorname{AR}_{\mc A}(\mf{a})\,d\mu(\mf{a}) \leq \int \operatorname{AR}_{\mc B}(\mf{a})\,d\mu(\mf{a}).
\eeq
In turn, $\operatorname{AR}_{\mc B}(\mf{a})$ is bounded from above by $\operatorname{AR}^*_{\mc B}$, so since $\int d\mu(\mf{a}) = 1$, we obtain the lower bound in \eqref{Eq: AR_Bound}. Moreover, note that $J_{\mc{A}}(\mf{a}) = \operatorname{MR}_{\mc{A}}(\mf{a})J_o(\mf{a})$ and similarly for $\mc{B}$. Hence we have
\begin{equation}
    \int \operatorname{MR}_{\mc{A}}(\mf{a})J_o(\mf{a})\,d\mu(\mf{a}) \leq \int \operatorname{MR}_{\mc{B}}(\mf{a})J_o(\mf{a})\,d\mu(\mf{a}) \leq \operatorname{MR}^*_\mc{B} \int J_o(\mf{a})\,d\mu(\mf{a}),
\end{equation}
which yields the lower bound in \eqref{Eq: MR_Bound}.
\end{proof}

\section{Proof of Theorem \ref{Thm: Cost_of_a_strategy}}\label{App: Cost_of_a_strategy}
Here we prove Theorem \ref{Thm: Cost_of_a_strategy}, expressing the cost incurred by a Bayesian strategy as a quadratic form of $\mf{a}$.
\begin{proof}
By \eqref{Eq: Cost_strategy_breakdown}, the cost incurred by a Bayesian strategy is
\begin{align}
\mc{J}(\mf{q}, \mf{a}; \mu)
&= \int\limits_{T_0}^T\left[ \check{\mf{q}}^TQ\check{\mf{q}} + (S_{11}\bar{\mf{q}} + S_{12}\bar{\mf{a}})^TR^{-1}(S_{11}\bar{\mf{q}} + S_{12}\bar{\mf{a}})\right]dt\\\nonumber
& + \int\limits_{T_0}^T\operatorname{Trace}\bigg(Q\Sigma_{\mf{q}\hat{\mf{q}}} + S_{11}^TR^{-1}S_{11}\Sigma_{\hat{\mf{q}}\hat{\mf{q}}} 
 + 2S_{11}^TR^{-1}S_{12}\Sigma_{\hat{\mf{q}}\hat{\mf{a}}} + S_{12}^TR^{-1}S_{12}\Sigma_{\hat{\mf{a}}\hat{\mf{a}}}  \bigg)dt.
\end{align}
To compute this cost, we need to derive the ODEs for the aforementioned mean and covariances from the SDEs \eqref{Eq: SDE_Strategy}. This can be done by computing the expectations for the discretized dynamics with a time step $\Delta t$ and taking the limit as $\Delta t \rightarrow 0$. For example,
\bqn \Delta \Bar{\mf{a}} &=& \E[\Delta\mf{a}]\\ &=& \E\left[P_{12}\Sigma_V^{-1}\left[(\mf{q} - \hat{\mf{q}})\Delta t +\Delta \mf{V}\right]\right]\\ &=& P_{12}\Sigma_V^{-1}\left[(\check{\mf{q}} - \Bar{\mf{q}})\Delta t\right].\eqn
Therefore, \bql\dot{\Bar{\mf{a}}} = P_{12}\Sigma_V^{-1}(\check{\mf{q}} - \Bar{\mf{q}}).\eql
Again,
\bqn 
\Sigma_{\hat{\mf{a}}\hat{\mf{a}}}(t + \Delta t)
&=& \E\left[\{\hat{\mf{a}}(t + \Delta t) - \Bar{\mf{a}}(t + \Delta t)\}\{\hat{\mf{a}}(t + \Delta t) - \Bar{\mf{a}}(t + \Delta t)\}^T\right]\\
&=& \E\bigg[ (\hat{\mf{a}} - \Bar{\mf{a}} + P_{12}\Sigma_V^{-1}\{(\mf{q} - \check{\mf{q}}) - (\hat{\mf{q}} - \Bar{\mf{q}})\}\Delta t + P_{12}\Sigma_V^{-1}\Delta\mf{V})\\
&& (\hat{\mf{a}} - \Bar{\mf{a}} + P_{12}\Sigma_V^{-1}\{(\mf{q} - \check{\mf{q}}) - (\hat{\mf{q}} - \Bar{\mf{q}})\}\Delta t + P_{12}\Sigma_V^{-1}\Delta\mf{V})^T\bigg]\\
&=& \Sigma_{\hat{\mf{a}}\hat{\mf{a}}}(t) + \bigg[(\Sigma_{\mf{q}\hat{\mf{a}}} - \Sigma_{\hat{\mf{q}}\hat{\mf{a}}})^T\Sigma_V^{-1}P_{12} + P_{12}\Sigma_V^{-1}(\Sigma_{\mf{q}\hat{\mf{a}}} - \Sigma_{\hat{\mf{q}}\hat{\mf{a}}}) + P_{12}\Sigma_V^{-1}P_{12}\bigg]\Delta t,
\eqn
i.e.,
\bql
\dot{\Sigma}_{\hat{\mf{a}}\hat{\mf{a}}} = (\Sigma_{\mf{q}\hat{\mf{a}}} - \Sigma_{\hat{\mf{q}}\hat{\mf{a}}})^T\Sigma_V^{-1}P_{12} + P_{12}\Sigma_V^{-1}(\Sigma_{\mf{q}\hat{\mf{a}}} - \Sigma_{\hat{\mf{q}}\hat{\mf{a}}}) + P_{12}\Sigma_V^{-1}P_{12}.
\eql 
Similarly we obtain the ODEs
\begin{align}
\dot{\check{\mf{q}}} &= \mf{a} - R^{-1}(S_{11}\Bar{\mf{q}} - S_{12}\Bar{\mf{a}})\\\nonumber
\dot{\Bar{\mf{q}}} &= \Bar{\mf{a}} - R^{-1}(S_{11}\Bar{\mf{q}} - S_{12}\Bar{\mf{a}}) + P_{11}\Sigma_V^{-1}(\check{\mf{q}} - \Bar{\mf{q}})\\\nonumber 
\dot{\Bar{\mf{a}}} &= P_{12}\Sigma_V^{-1}(\check{\mf{q}} - \Bar{\mf{q}})\\\nonumber
\dot{\Sigma}_{\mf{q}\mf{q}} &= \Sigma_W - (\Sigma_{\mf{q}\hat{\mf{q}}}S_{11} + \Sigma_{\mf{q}\hat{\mf{a}}}S_{12}^T)R^{-1} - R^{-1}(\Sigma_{\mf{q}\hat{\mf{q}}}S_{11} + \Sigma_{\mf{q}\hat{\mf{a}}}S_{12}^T)^T\\\nonumber
\dot{\Sigma}_{\mf{q}\hat{\mf{q}}}&= (\Sigma_{\mf{q}\mf{q}} - \Sigma_{\mf{q}\hat{\mf{q}}})\Sigma_V^{-1}P_{11} - \Sigma_{\mf{q}\hat{\mf{q}}}S_{11}^TR^{-1} +\Sigma_{\mf{q}\hat{\mf{a}}}(I - R^{-1}S_{12})^T - R^{-1}(S_{11}\Sigma_{\hat{\mf{q}}\hat{\mf{q}}} + S_{12}\Sigma_{\hat{\mf{q}}\hat{\mf{a}}}^T)\\\nonumber
\dot{\Sigma}_{\mf{q}\hat{\mf{a}}} &= (\Sigma_{\mf{q}\mf{q}} - \Sigma_{\mf{q}\hat{\mf{q}}})\Sigma_V^{-1}P_{12} - R^{-1}(S_{11}\Sigma_{\hat{\mf{q}}\hat{\mf{a}}} + S_{12}\Sigma_{\hat{\mf{a}}\hat{\mf{a}}}^T)\\\nonumber
\dot{\Sigma}_{\hat{\mf{q}}\hat{\mf{q}}}&= P_{11}^T\Sigma_V^{-1}P_{11} + \Sigma_{\hat{\mf{q}}\hat{\mf{a}}}(I -R^{-1}S_{12})^T + (I -R^{-1}S_{12})\Sigma_{\hat{\mf{q}}\hat{\mf{a}}}^T + (\Sigma_{\mf{q}\hat{\mf{q}}} - \Sigma_{\hat{\mf{q}}\hat{\mf{q}}})^T\Sigma_V^{-1}P_{11}\\\nonumber & + P_{11}\Sigma_V^{-1}(\Sigma_{\mf{q}\hat{\mf{q}}} - \Sigma_{\hat{\mf{q}}\hat{\mf{q}}}) - \Sigma_{\hat{\mf{q}}\hat{\mf{q}}}S_{11}R^{-1} - R^{-1}S_{11}\Sigma_{\hat{\mf{q}}\hat{\mf{q}}}\\\nonumber
\dot{\Sigma}_{\hat{\mf{q}}\hat{\mf{a}}}&= (\Sigma_{\mf{q}\hat{\mf{q}}} - \Sigma_{\hat{\mf{q}}\hat{\mf{q}}})^T\Sigma_V^{-1}P_{12} + (I - R^{-1}S_{12})\Sigma_{\hat{\mf{a}}\hat{\mf{a}}}  - R^{-1}S_{12}\Sigma_{\hat{\mf{q}}\hat{\mf{a}}} + P_{11}\Sigma_V^{-1}(\Sigma_{\mf{q}\hat{\mf{a}}} - \Sigma_{\hat{\mf{q}}\hat{\mf{a}}} + P_{12})\\\nonumber
\dot{\Sigma}_{\hat{\mf{a}}\hat{\mf{a}}}&= (\Sigma_{\mf{q}\hat{\mf{a}}} - \Sigma_{\hat{\mf{q}}\hat{\mf{a}}})^T\Sigma_V^{-1}P_{12} + P_{12}\Sigma_V^{-1}(\Sigma_{\mf{q}\hat{\mf{a}}} - \Sigma_{\hat{\mf{q}}\hat{\mf{a}}}) + P_{12}\Sigma_V^{-1}P_{12}.
\end{align}
The ODEs for $\check{\mf{q}}$, $\bar{\mf{q}}$, and $\bar{\mf{a}}$ yield $\check{\mf{q}} = C_1(t)\mf{a}$, $\bar{\mf{q}} = C_2(t)\mf{a}$ and $\bar{\mf{a}}=C_3(t)\mf{a}$ for some $C_1,\,C_2,\,C_3:[0,\, T] \rightarrow \R^{d\times d}$ that satisfy the same set of ODEs. Hence the incurred cost \eqref{Eq: Cost_strategy_breakdown} for a particular Bayesian strategy becomes
\begin{align} \label{Eq: cost_strategy_ultimate}
\mc{J}(\mf{q}, \mf{a}; \mu)
&= \E\left[\;\int\limits_{T_0}^T \left( \mf{q}^T Q\mf{q} + \mf{u}^T R\mf{u}\right)dt\right]\\\nonumber
&= \int\limits_{T_0}^T\mf{a}^T\bigg[ C_1^TQC_1 +  (S_{11}C_2 + S_{12}C_3)^TR^{-1}(S_{11}C_2 + S_{12}C_3)\bigg]\mf{a}\,dt\\\nonumber
& + \int\limits_{T_0}^T\operatorname{Trace}\bigg(Q\Sigma_{\mf{q}{\mf{q}}} + S_{11}^TR^{-1}S_{11}\Sigma_{\hat{\mf{q}}\hat{\mf{q}}} + 2S_{11}^TR^{-1}S_{12}\Sigma_{\hat{\mf{q}}\hat{\mf{a}}} + S_{12}^TR^{-1}S_{12}\Sigma_{\hat{\mf{a}}\hat{\mf{a}}}  \bigg)dt \\\nonumber
&= \mf{a}^TX(\Sigma, \Sigma_V, T_0, T) \mf{a} + Y(\Sigma, \Sigma_V, T_0, T),
\end{align}
where 
\beq X(\Sigma, \Sigma_V, T_0, T) = \int\limits_{T_0}^T\bigg[ C_1^TQC_1 + (S_{11}C_2 + S_{12}C_3)^TR^{-1}(S_{11}C_2 + S_{12}C_3)\bigg]dt\eeq
and \beq Y(\Sigma, \Sigma_V, T_0, T) = \int\limits_{T_0}^T\operatorname{Trace}\bigg(Q\Sigma_{\mf{q}\mf{q}} + S_{11}^TR^{-1}S_{11}\Sigma_{\hat{\mf{q}}\hat{\mf{q}}} + 2S_{11}^TR^{-1}S_{12}\Sigma_{\hat{\mf{q}}\hat{\mf{a}}} + S_{12}^TR^{-1}S_{12}\Sigma_{\hat{\mf{a}}\hat{\mf{a}}}  \bigg)dt. \eeq
This completes the proof of Theorem \ref{Thm: Cost_of_a_strategy}. 
\end{proof}

\section{Proof of Theorem \ref{Thm: Cost_for_known_a}} \label{App: Cost_for_known_a}
Theorem \ref{Thm: Cost_for_known_a} is proved in a similar fashion to Theorem \ref{Thm: Cost_of_a_strategy}, i.e., by deriving the ODEs for $\Check{\mf{q}}^*$, $\Bar{\mf{q}}^*$, and the corresponding covariances.
\begin{proof}
From the Riccati equation \eqref{Eq: Estimation_Riccati_known_a} and the optimal control \[\mf{u}^*(t) = -R^{-1}(S_{11}\hat{\mf{q}}^*(t) + S_{12}{\mf{a}}(t)),\]the ODEs \eqref{Eq: ODE_Strategy} becomes
\begin{align}
\dot{\check{\mf{q}}}^* &= \mf{a} - R^{-1}(S_{11}\Bar{\mf{q}}^* - S_{12}{\mf{a}})\\\nonumber
\dot{\Bar{\mf{q}}}^* &= {\mf{a}} - R^{-1}(S_{11}\Bar{\mf{q}}^* - S_{12}{\mf{a}}) + P_{11}\Sigma_V^{-1}(\check{\mf{q}}^* - \Bar{\mf{q}}^*)\\\nonumber 
\dot{\Sigma}_{\mf{q}\mf{q}}^* &= \Sigma_W - (\Sigma_{\mf{q}\hat{\mf{q}}}^*S_{11}R^{-1} + R^{-1}S_{11}\Sigma_{\mf{q}\hat{\mf{q}}}^{*^T})\\\nonumber
\dot{\Sigma}_{\mf{q}\hat{\mf{q}}}^*&= (\Sigma_{\mf{q}\mf{q}}^* - \Sigma_{\mf{q}\hat{\mf{q}}}^*)\Sigma_V^{-1}P_{11} - \Sigma_{\mf{q}\hat{\mf{q}}}^*S_{11}^TR^{-1} - R^{-1}S_{11}\Sigma_{\hat{\mf{q}}\hat{\mf{q}}}^* \\\nonumber
\dot{\Sigma}_{\hat{\mf{q}}\hat{\mf{q}}}^*&= P_{11}^{*^T}\Sigma_V^{-1}P_{11} + (\Sigma_{\mf{q}\hat{\mf{q}}}^* - \Sigma_{\hat{\mf{q}}\hat{\mf{q}}}^*)^T\Sigma_V^{-1}P_{11}^* + P_{11}^*\Sigma_V^{-1}(\Sigma_{\mf{q}\hat{\mf{q}}}^* - \Sigma_{\hat{\mf{q}}\hat{\mf{q}}}^*) \\\nonumber &- \Sigma_{\hat{\mf{q}}\hat{\mf{q}}}^*S_{11}R^{-1} - R^{-1}S_{11}\Sigma_{\hat{\mf{q}}\hat{\mf{q}}}^*
\end{align}
The ODEs for $\check{\mf{q}}^*$ and $\bar{\mf{q}}$ yield $\check{\mf{q}} = C^*_1(t)\mf{a}$ and $\bar{\mf{q}} = C^*_2(t)\mf{a}$ for some $C^*_1,\,C^*_2:[0,\, T] \rightarrow \R^{d\times d}$ that satisfy the same set of ODEs. Proceeding in the same fashion as in \eqref{Eq: Cost_strategy_breakdown} and \eqref{Eq: cost_strategy_ultimate}, the optimal cost for the Bayesian strategy with known $\mf{a}$ becomes
\begin{align} 
J(\mf{q}, \mf{a})
&=\E\left[\;\int\limits_{T_0}^T \left( \mf{q}^{*^T} Q\mf{q}^* + \mf{u}^{*^T} R\mf{u}^*\right)dt\right]\\\nonumber
&= \int\limits_{T_0}^T\mf{a}^T\bigg[ C_1^{*^T}QC^*_1 +  (S_{11}C^*_2 + S_{12})^TR^{-1}(S_{11}C^*_2 + S_{12})\bigg]\mf{a}\,dt\\\nonumber
& + \int\limits_{T_0}^T\operatorname{Trace}\bigg(Q\Sigma_{\mf{q}\hat{\mf{q}}}^* + S_{11}^TR^{-1}S_{11}\Sigma_{\hat{\mf{q}}\hat{\mf{q}}}^* \bigg)dt \\\nonumber
&= \mf{a}^TX^*(\Sigma_V, T_0, T) \mf{a} + Y^*(\Sigma_V, T_0, T),
\end{align} 
where 
\beq X^*(\Sigma_V, T_0, T) = \int\limits_{T_0}^T\bigg[C_1^{*^T}QC^*_1 + (S_{11}C^*_2 + S_{12})^TR^{-1}(S_{11}C^*_2 + S_{12})\bigg]dt\eeq
and \beq Y^*(\Sigma_V, T_0, T) = \int\limits_{T_0}^T\operatorname{Trace}\bigg(Q\Sigma_{\mf{q}\mf{q}}^* + S_{11}^TR^{-1}S_{11}\Sigma_{\hat{\mf{q}}\hat{\mf{q}}}^*\bigg)dt. \eeq 
This completes the proof of Theorem \ref{Thm: Cost_for_known_a}.
\end{proof}
\section{Proof of Theorem \ref{Thm: Regret_Inedep_a} for the scalar case} \label{App: Additive_Regret}
Here we prove the Theorem \ref{Thm: Regret_Inedep_a} for $q,\,a\in\R$. We drop the boldface notation when considering the scalar variables. We use the notation $\Sigma = \sigma^2>0$, $\Sigma_W = \sigma_w^2>0$, $\Sigma_V = \sigma_v^2>0$, and we assume for simplicity that $Q=R=1$.
\begin{proof}
From \eqref{Eq: cost_strategy_ultimate}, \eqref{Eq: ODE_Strategy}, and \eqref{Eq: Estimation_Riccati}
\bql a^2X(\sigma, \sigma_v^2, T_0, T) = \int\limits_{T_0}^T \left[\check{q}^2 + (S_{12}\bar{a} + S_{11}\bar{q})^2\right],\eql
where
\bnl\label{Eq: scalar_strategy}
\dot{\check{q}} &=& a - (S_{12}\bar{a} + S_{11}\bar{q}),\, \check{q}(0)=0\\\nonumber
\dot{\bar{a}} &=& \dfrac{P_{12}}{\sigma_v^2} (\check{q}-\bar{q}),\, \bar{a}(0)=0\\\nonumber
\dot{\bar{q}} &=& (1 - S_{12})\bar{a} - S_{11}\bar{q} + \dfrac{P_{11}}{\sigma_v^2} (\check{q}-\bar{q}),\,\bar{q}(0)=0\\\nonumber
\dot{P}_{11} &=& 2P_{12} - \dfrac{P_{11}^2}{\sigma_v^2} + \sigma_w^2,\,P_{11}(0) = \sigma_{q_0}^2\\\nonumber
\dot{P}_{12} &=& P_{22} - \dfrac{P_{11}P_{12}}{\sigma_v^2},\,P_{12}=0\\\nonumber
\dot{P}_{22} &=& -\dfrac{P_{12}^2}{\sigma_v^2},\, P_{22}(0) = \sigma^2.
\enl
Similarly, from \eqref{Eq: Cost_known_a}, \eqref{Eq: ODE_Strategy_Known_a}, and \eqref{Eq: Estimation_Riccati_known_a}
\bql a^2X^*(\sigma_v^2, T_0, T) = \int\limits_{T_0}^T \left[\check{q}^{*^2} + (S_{12}a + S_{11}\bar{q}^*)^2\right]dt,\eql
where
\begin{align}\label{Eq: scalar_strategy_known_a}
\dot{\check{q}}^* &= a - (S_{12}\bar{a}^* + S_{11}\bar{q}^*),\, \check{q}^*(0)=0\\\nonumber
\dot{\bar{q}}^* &= (1 - S_{12})a - S_{11}\bar{q}^* + \dfrac{P^*_{11}}{\sigma_v^2} (\check{q}^*-\bar{q}^*),\,\bar{q}^*(0)=0\\\nonumber
\dot{P}^*_{11} &= - \dfrac{P_{11}^{*^2}}{\sigma_v^2} + \sigma_w^2,\,P^*_{11}(0) = \sigma_{q_0}^2.
\end{align}
Now, we wish to show that as $\sigma \rightarrow \infty$, the solutions of two set of ODEs \eqref{Eq: scalar_strategy} and \eqref{Eq: scalar_strategy_known_a} become equivalent. Notice that, if $\bar{q}(t)\equiv\check{q}(t)$ and $\bar{a}(t)\equiv a$ for all $t>0$ as $\sigma\rightarrow \infty$, then $\bar{q}(t)\equiv\check{q}(t)\equiv\check{q}^*(t)\equiv\bar{q}^*(t)$ for $t>0$ if $\check{q}^*(0)=\bar{q}^*(0)=0$. This is confirmed by the following lemma.
\begin{lemma}
As $\sigma\rightarrow\infty$, $\bar{q}(t)\rightarrow\check{q}(t)$ and $\bar{a}(t)\rightarrow a$ for $t>0$.
\end{lemma}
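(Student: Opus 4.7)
The plan is to derive closed-form expressions for the two errors $e_q(t) := \check{q}(t) - \bar{q}(t)$ and $e_a(t) := a - \bar{a}(t)$ in terms of the Kalman covariance entries $P_{12}(t)$ and $P_{22}(t)$, and then to verify that those covariances stay bounded as $\sigma \to \infty$ for each fixed $t > 0$. The target identities are
\[
  e_q(t) = \frac{P_{12}(t)}{\sigma^2}\,a, \qquad e_a(t) = \frac{P_{22}(t)}{\sigma^2}\,a,
\]
from which the lemma is immediate once $P_{12}, P_{22}$ are shown to have finite limits on $(0, T]$.

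\emph{Linearity in $a$.} Because the control $u = -R^{-1}(S_{11}\hat{q} + S_{12}\hat{a})$ is linear in the filter estimates and the Kalman filter is linear in the observations, the closed-loop system is linear Gaussian and its conditional means depend linearly on the true drift: in the notation of Theorem~\ref{Thm: Cost_of_a_strategy}, $\check{q}(t) = C_1(t)\,a$, $\bar{q}(t) = C_2(t)\,a$, and $\bar{a}(t) = C_3(t)\,a$. Hence $e_q = (C_1 - C_2)\,a$ and $e_a = (1 - C_3)\,a$, and it suffices to identify these two scalar coefficients.

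\emph{Bayesian orthogonality.} View the system under the joint Bayesian law with $a \sim \mathcal{N}(0, \sigma^2)$. The Kalman error $a - \hat{a}(t)$ is $L^2$-orthogonal to $\sigma(y_{[0,t]}, q(0))$, so $\mathrm{Cov}(a, \hat{a}) = \mathrm{Var}(\hat{a}) = \sigma^2 - P_{22}(t)$. Combining with the direct evaluation $\mathrm{Cov}(a, \hat{a}) = \mathbb{E}_a[a\,\bar{a}(a)] = C_3\,\sigma^2$ forces $C_3 = 1 - P_{22}/\sigma^2$ and thus $e_a = a\,P_{22}/\sigma^2$. Similarly, the decomposition $\mathrm{Cov}(a, q) = \mathrm{Cov}(a, \hat{q}) + \mathrm{Cov}(a, q - \hat{q})$, whose three terms evaluate to $C_1\,\sigma^2$, $C_2\,\sigma^2$, and $\mathrm{Cov}(a - \hat{a}, q - \hat{q}) = P_{12}$ respectively (the cross contribution $\mathrm{Cov}(\hat{a}, q - \hat{q})$ vanishes because $\hat{a}$ is measurable with respect to $\sigma(y_{[0,t]}, q(0))$), yields $C_1 - C_2 = P_{12}/\sigma^2$ and hence $e_q = a\,P_{12}/\sigma^2$.

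\emph{Boundedness of $P$ as $\sigma \to \infty$.} It remains to show that for each $t > 0$ both $P_{12}(t)$ and $P_{22}(t)$ have finite limits as $\sigma \to \infty$. The useful change of variables is the information matrix $\Lambda := P^{-1}$, whose Riccati ODE has initial condition $\Lambda(0) = \mathrm{diag}(\sigma_{q_0}^{-2}, \sigma^{-2})$: only the $(2,2)$ entry depends on $\sigma$, and it converges regularly to $0$. The scalar ODE $\dot{\Lambda}_{11} = -\Lambda_{11}^2 + \sigma_v^{-2}$ decouples and is independent of $\sigma$, after which $\Lambda_{12}^\infty(t) = e^{-g(t)} - 1$ and $\Lambda_{22}^\infty(t) = \int_0^t (1 - e^{-2g(s)})\,ds$ with $g(t) := \int_0^t \Lambda_{11}(s)\,ds$. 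A short computation using the identity $\tfrac{d}{dt}\det\Lambda = -\Lambda_{11}\det\Lambda + \sigma_v^{-2}\Lambda_{22}$ then shows $\det\Lambda^\infty(t) > 0$ for every $t > 0$, so $P^\infty(t) := (\Lambda^\infty(t))^{-1}$ is finite and $P(t) \to P^\infty(t)$ by continuous dependence on initial data. Substituting back into the formulas above gives $e_q(t), e_a(t) \to 0$ pointwise for $t > 0$. The substantive step is this information-matrix analysis ruling out blowup of $P^\infty(t)$ on $(0, T]$; note that the convergence cannot be uniform down to $t = 0$, consistent with $\bar{a}(0) = 0 \neq a$, but pointwise convergence on $(0, T]$ is exactly what the lemma asserts.
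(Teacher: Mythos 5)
Your proof is correct, and it takes a genuinely different route from the paper's. The paper writes the error $e=(\bar q-\check q,\ \bar a-a)$ as the solution of the linear ODE \eqref{Eq: e}, solves the filter Riccati equation in closed form for the normalized case $\sigma_w=\sigma_v=\sigma_{q_0}=1$ (Eq.~\eqref{Eq: P_solution_scalar}), then integrates the error ODE explicitly and reads off the $\sigma^{-2}$ decay from the resulting formula. You instead bypass the explicit solution entirely: the law-of-total-covariance/orthogonality argument under the joint Bayesian law gives the exact identities $e_q=P_{12}a/\sigma^2$ and $e_a=P_{22}a/\sigma^2$ (which one can check agree with the paper's closed-form $e(t)$ and $P(t)$, up to the sign convention --- note the paper's displayed $e(t)$ has a sign typo against its stated initial condition $e(0)=[0,\,-a]^T$), and then the information-filter change of variables $\Lambda=P^{-1}$ reduces everything to showing $\det\Lambda^\infty(t)>0$ for $t>0$; I verified your determinant identity $\tfrac{d}{dt}\det\Lambda=-\Lambda_{11}\det\Lambda+\sigma_v^{-2}\Lambda_{22}$ and the positivity argument it feeds ($\Lambda_{22}^\infty(t)>0$ for $t>0$ forces $\det\Lambda^\infty(t)>0$ by variation of constants), so the step you label ``a short computation'' does go through. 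What each approach buys: the paper's computation is elementary but opaque and tied to the special parameter values (it concedes that general $\sigma_w,\sigma_v,\sigma_{q_0}$ require ``more involved algebraic computations''); your argument isolates the structural reason the lemma holds --- the bias of the Bayesian filter at a fixed true $a$ is exactly the posterior covariance scaled by the prior precision --- and the information-matrix analysis extends with no extra effort to general noise levels and, with block matrices in place of scalars, to the vector case the paper only sketches. It would be worth stating the two covariance identities as a standalone observation, since they also explain quantitatively why the convergence degenerates as $t\to0^+$ ($P_{22}^\infty(t)\sim 3\sigma_v^2/t^3$).
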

\begin{proof}
Let $e_a \triangleq \bar{a} - a$ and $e_q \triangleq \bar{q} - \check{q}$. The first three ODEs in \eqref{Eq: scalar_strategy} can be expressed as
\bql\label{Eq: e} \dot{e} = \begin{bmatrix}\sigma_v^{-2}P_{11} & 1 \\\sigma_v^{-2}P_{12} & 0  \end{bmatrix}e,\quad e(0) = \begin{bmatrix}0 \\ -a\end{bmatrix}\eql where $e = [e_q\quad e_a]^T$. To illustrate the proof, we will take $\sigma_w=\sigma_v=\sigma_{q_0} = 1$. After some algebraic manipulation,
\begin{align}\label{Eq: P_solution_scalar}
P(t) &= \begin{bmatrix} P_{11}(t) & P_{12}(t) \\ P_{12}(t) & P_{22}(t) \end{bmatrix} \\\nonumber &= \dfrac{1}{\sigma^2(2t - 3 + 4e^{-t} - e^{-2t}) + 2}\begin{bmatrix} (2t - 1 + e^{-2t})\sigma^2 + 2 & 2\sigma^2(1-e^{-t})\\ 2\sigma^2(1-e^{-t}) & 2\sigma^2\end{bmatrix}.
\end{align}
This allows us to solve the ODE \eqref{Eq: e}:
\bql e(t) = \dfrac{2a}{\sigma^2(2t - 3 + 4e^{-t} - e^{-2t}) + 2}\begin{bmatrix}1 - e^{-t} \\ 1\end{bmatrix}\eql
It is evident that $e(t)$ is continuous on $t\geq 0$. For fixed $t$, $e(t)$ decays at a rate proportional to $\sigma^{-2}$ as $\sigma\rightarrow\infty$. On the other hand, for fixed $\sigma$, the decay rate as $t\to\infty$ is proportional to $t^{-1}$. Hence, $e(t)$ converges pointwise to $0$ for all $t>0$ as $\sigma\rightarrow\infty$, i.e., $\bar{q}(t)\rightarrow\check{q}(t)$ and $\bar{a}(t)\rightarrow a$ pointwise for $t>0$ as $\sigma\rightarrow\infty$.
\end{proof}
Although we have assumed $\sigma_w=\sigma_v=\sigma_{q_0}=1$, the proof is readily generalized for arbitrary nonnegative values of these quantities at the cost of more involved algebraic computations. This completes the proof of the first statement of Theorem \ref{Thm: Regret_Inedep_a}.

For the second part of the theorem, notice that in the limit $\sigma \to \infty$, $P_{12}(t)$ scales as $t^{-1}$ for small positive $t$. Consider the right-hand side of the ODE for $\Sigma_{\hat{a}\hat{a}}$ in \eqref{Eq: ODE_Strategy}; this expression is independent of $\Sigma_{\hat{a}\hat{a}}$ and goes as $t^{-2}$ due to the term that is quadratic in $\Sigma_{\hat{a}\hat{a}}$. This makes $\Sigma_{\hat{a}\hat{a}}\sim \dfrac{C}{t}$ as $t\to 0^+$ for some constant $C$. From \eqref{Eq: cost_strategy_ultimate}, $Y(\sigma,\sigma_v,T_0, T)$ involves an integral of $\Sigma_{\hat{a}\hat{a}}$, which diverges logarithmically as $T_0\rightarrow 0$. Furthermore, all of the integrands are non-negative. 
This completes the proof of Theorem \ref{Thm: Regret_Inedep_a} for the scalar case. 
\end{proof}

The general proof for $\mf{q},\,\mf{a}\in\R^d$ requires solving for $P(t)$ and $\mf{e}(t)$ in $\R^{2d\times 2d}$ and $\R^{2d}$ respectively, but the asymptotics of the solutions remain the same. Hence, the results obtained in the scalar case will continue to hold. 

\section{Optimal estimates of ${\bf a}$ and ${\bf q}$} \label{App: Estimation}

In this appendix we derive the optimal estimates of ${\bf a}$ and ${\bf q}$ at a fixed time $T>0$. We observe ${\bf y}(t)$ for $t \in [0, T]$ and aim to compute the joint posterior probability distribution of $({\bf a}, {\bf q}(T))$ in the limit as the covariance matrix of the Gaussian prior on ${\bf a}$, $\Sigma$, diverges to infinity. We assume that the control input ${\bf u}(t)$ is zero on $[0, T]$.

Since we know that the distribution of $({\bf a}, {\bf q}(T))$ is jointly normal, the means of the marginal distributions of ${\bf a}$ and ${\bf q}(T)$ are the optimal estimates $\hat{\bf a}$ and $\hat{\bf q}$. By \eqref{Eq: Continuous_Dynamics}, we know $${\bf q}(t) = {\bf a}t+{\bf W}(t)$$ and $${\bf y}(t) = \int_0^t {\bf q}(t)\,dt + \int_0^t d{\bf V} = \frac{t^2}{2}{\bf a}+\int_0^t {\bf W}(t)\,dt + {\bf V}(t).$$
Let ${\bf \tilde{W}}(t) = {\bf W}(t)-\frac{t}{T}{\bf W}(T)$. Then ${\bf W}(T)$ and $({\bf W}(t)), t \in [0, T]$ are independent. Then $${\bf q}(t) = \left({\bf a}+\frac{{\bf W}(T)}{T}\right)t+\tilde{{\bf W}}(t)$$ and $${\bf y}(t) = \frac{1}{2}\left({\bf a}+\frac{{\bf W}(T)}{T}\right)t^2+\int_0^T\tilde{{\bf W}}(t)\,dt+{\bf V}(t).$$ Moreover we may compute
\begin{equation}
    \E\left[{\bf V}(t_1){\bf V}(t_2)\right] = \Sigma_V\min(t_1,t_2),
\end{equation}
\begin{equation}
    \E\left[{\bf W}(t_1){\bf W}(t_2)\right] = \Sigma_W\min(t_1,t_2),
\end{equation}
and
\begin{equation}
    \E\left[\tilde{\bf W}(t_1)\tilde{\bf W}(t_2)\right] = \Sigma_W\left(\min(t_1,t_2)-\frac{t_1t_2}{T}\right).
\end{equation}
We will let ${\bf X}(t) = \int_0^T \tilde{\bf W}(t)\,dt + {\bf V}(t)$, which is a Gaussian process on $[0, T]$ that is independent of both ${\bf a}$ and ${\bf W}(T)$.

In order to determine the joint probability distribution of $({\bf a}, {\bf q}(T))$ given the measurements of ${\bf y}$, we can start by finding the joint distribution of $({\bf a}, {\bf W}(T))$. To begin, we let $t_\nu = \nu h$ for $\nu = 1, \dots, N$ with $h = \frac{T}{N}$. Then, the joint probability density of $({\bf a}, {\bf W}(T), ({\bf X}(t))_{t = t_1, \dots, t_N})$ (not yet conditioned on the values of $y$) is given by
\begin{equation}
    dP = \rho({\bf a})\,d{\bf a} \frac{\exp\left[-\frac{1}{2T}{\bf W}(T)\Sigma_W^{-1}{\bf W}(T)\right]}{\sqrt{\det(2\pi T\Sigma_W)}}\,d{\bf W}(T)\frac{\exp\left[-\frac{1}{2}{\mathbbm X}(T)S^{-1}{\mathbbm X}(T)\right]}{\sqrt{\det(2\pi S)}}\,d{\bf X}_1\dots d{\bf X}_N
\end{equation}
where $\mathbbm{X}$ is the vector formed by concatenating ${\bf X}(t_1), \dots, {\bf X}(t_N)$ and $S_{mn} = \E[\mathbbm{X}_m \mathbbm{X}_n]$. It will be most convenient to consider $S$ as a matrix with blocks $S(t_\mu, t_\nu) = \E\left[{\bf X}(t_\mu){\bf X}(t_\nu)\right]$. Moreover, we will separate out the normalizing factors such as $\sqrt{\det(2\pi S)}$ which are independent of the random variables of interest. 

We know that ${\bf X}(t_\nu) = {\bf y}(t_\nu)-\frac{1}{2}\left({\bf a}+\frac{{\bf W}(T)}{T}\right)t_\nu^2 = {\bf y}(t_\nu)-\frac{t_\nu^2}{2T}{\bf q}(T)$. Let $\mathbbm{y}$ and $\mathbbm{R}$ be formed by vertically concatenating ${\bf y}$ and the matrices $\frac{t_\nu^2}{2T}I$ for $t = t_1, \dots, t_N$; then we may write $$\mathbbm{X} = \mathbbm{y}-\mathbbm{R}{\bf q}(T).$$ Thus
\begin{equation}
\begin{aligned}
    dP &= F(\mathbbm{y})\rho({\bf a})\,d{\bf a} \exp\left[-\frac{1}{2}{\bf W}(T)\Sigma_W^{-1}{\bf W}(T)\right]\,d{\bf W}(T) \\ &\times \exp\left[-\frac{1}{2}(\mathbbm{y}-\mathbbm{R}{\bf q}(T))^TS^{-1}(\mathbbm{y}-\mathbbm{R}{\bf q}(T))\right]\,d{\bf y}_1\dots d{\bf y}_N.
\end{aligned}
\end{equation}
Conditioning on $y$ and noting that $\rho({\bf a})$ becomes independent of ${\bf a}$ on any bounded set as $\Sigma \to \infty$:
\begin{align*}
      dP' &= F(\mathbbm{y})\exp\left[-\frac{1}{2}{\bf W}(T)^T\Sigma_W^{-1}{\bf W}(T)\right]\\
      & \times\exp\left[-\frac{1}{2}(\mathbbm{y}-\mathbbm{R}{\bf q}(T))^TS^{-1}(\mathbbm{y}-\mathbbm{R}{\bf q}(T))\right]d{\bf a}\,d{\bf W}(T)
\end{align*}
where $F(\mathbbm{y})$ is a normalizing factor.
We may change variables by noting $d{\bf a}\,d{\bf W}(T) = T^d d{\bf a}\,d{\bf q}(T)$ and ${\bf W}(T) = {\bf q}(T)-{\bf a}T$. Writing ${\bf q}$ as a shorthand for ${\bf q}(T)$ and expanding the parentheses in the second exponential, we obtain
\begin{equation}
    dP' = \tilde{F}(\mathbbm{y})\exp\left[-\frac{1}{2}({\bf q}-{\bf a}T)^T\Sigma_W^{-1}({\bf q}-{\bf a}T)-\frac{1}{2}{\bf q}^T\mathbbm{R}^TS^{-1}\mathbbm{R}{\bf q}+\mathbbm{y}^TS^{-1}\mathbbm{R}{\bf q}\right]d{\bf a}\,d{\bf q}
\end{equation}
with some normalization factor $\tilde{F}(\mathbbm{y})$.

We wish to compute $S^{-1}\mathbbm{R}$. We define the matrices $\omega(\nu)$ as the solution of the system of equations
\begin{equation} \label{Eq: Def_Omega}
    \sum_{\mu=1}^\nu \sigma(t_\nu, t_\mu)\omega(t_\mu) h = \frac{t_\nu^2}{2}I, \quad \nu = 1, \dots, N
\end{equation}
so that $S^{-1}\mathbbm{R}$ is formed by vertically concatenating the matrices $\omega(t_\nu)h$. Then
\begin{equation} \label{Eq: Term2}
    \mathbbm{y}^TS^{-1}\mathbbm{R}{\bf q} = \left(\sum_{\nu=1}^N \omega(t_\nu){\bf y}(t_\nu) h\right)^T \frac{{\bf q}}{T}
\end{equation}
and
\begin{equation} \label{Eq: Term3}
    \frac{1}{2}{\bf q}^T\mathbbm{R}^TS^{-1}\mathbbm{R}{\bf q} = \frac{1}{2T^2}{\bf q}^T \left (\sum_{\nu=1}^N \frac{t_\nu^2}{2}\omega(t_\nu)h \right){\bf q}.
\end{equation}
One may show that
\begin{equation}
    S(t_\nu, t_\mu) = \E[{\bf X}(t_\nu) {\bf X}(t_\mu)] = \begin{cases}\Sigma_V t_\nu+\Sigma_W\left(\frac{1}{2}t_\nu^2t_\mu-\frac{1}{6}t_\nu^3-\frac{1}{4T}t_\nu^2t_\mu^2\right) & t_\nu\leq t_\mu \\
    \Sigma_Vt_\mu+\Sigma_W\left(\frac{1}{2}t_\mu^2t_\nu-\frac{1}{6}t_\mu^3-\frac{1}{4T}t_\nu^2t_\mu^2\right) & t_\mu\leq t_\nu \end{cases}.
\end{equation}
Then, by taking the second forward difference of \eqref{Eq: Def_Omega} for $\nu = 1, \dots, N-1$, it follows that the matrices $\omega(t_\nu)$ must satisfy
\begin{equation}
\begin{aligned}
    \left(-\Sigma_V-\Sigma_W \frac{h^2}{6}\right)\omega(t_\nu) - \frac{\Sigma_W}{2T}\sum_{\mu=1}^{N-1} t_\mu^2 \omega(t_\mu)h + \Sigma_W\sum_{\mu=\nu+1}^{N-1} (t_\mu-t_\nu)\omega(t_\mu)h \\ = \frac{2t-T}{2}\Sigma_W\omega(T)h+I, \quad \nu = 1, \dots, N-1.
\end{aligned}
\end{equation}
We also have the constraint given by \eqref{Eq: Def_Omega} for $\nu = 1$, namely
\begin{equation}
    \sum_{\mu=1}^N \left[\Sigma_V+\left(\frac{t_\mu}{2}-\frac{t_\mu^2}{4}\right)\Sigma_W h-\frac{1}{6}\Sigma_W h^2\right]\omega(t_\mu)h = \frac{h}{2}.
\end{equation}

We conclude by considering the continuous limit $h = \frac{1}{N} \to 0$. In general, we assume $\omega(T)h$ tends to a limit $\kappa$, which is not necessarily zero. Then $\omega_0(t) = \omega(t)-\delta(t-T)\kappa$ is a continuous matrix-valued function satisfying the integral equations
\begin{equation}
\begin{aligned}
    -\Sigma_V \omega_0(t)-\frac{\Sigma_W}{2T}\int_0^T s^2\omega_0(s)\,ds+\Sigma_W\int_t^T (s-t)\omega_0(s)\,ds \\ = I+\frac{2t-T}{2}\Sigma_W\kappa, \quad \forall t \in [0, T)
\end{aligned}
\end{equation}
and
\begin{equation}
    \int_0^T \omega_0(s)\,ds = -\kappa.
\end{equation}
Taking the continuous limits of \eqref{Eq: Term2} and \eqref{Eq: Term3}, we see that the probability density $dP'$ is maximized when
\bna \label{Eq: max_posterior}
    &&\nabla\bigg\{-\frac{1}{2}({\bf q}-{\bf a}T)^T\Sigma_W^{-1}({\bf q}-{\bf a}T)-{\bf q}^T\bigg(\frac{1}{4T^2}\int_0^T t^2 \omega(t)\,dt\bigg){\bf q} \\ && +\bigg(\frac{1}{T}\int_0^T \omega(t){\bf y}(t)\,dt\bigg)^T{\bf q}\bigg\} = 0
\ena
where the gradient is taken over the ${\bf a}$ and ${\bf q}$ variables. The best estimates $\bf{\hat{a}}$ and $\bf{\hat{q}}$ are precisely the values that satisfy \eqref{Eq: max_posterior}. Setting the derivative with respect to ${\bf a}$ to 0 yields
\begin{equation}
    {\bf \hat{q}} = {\bf \hat{a}}T,
\end{equation}
and the derivative with respect to ${\bf q}$ equals
\begin{equation}
    -\gamma {\bf \hat{q}}+\beta({\bf y}) = 0,
\end{equation}
where
\begin{equation}
    \gamma = \frac{1}{2T^2}\left(\int_0^T t^2 \omega_0(t)\,dt+T^2\kappa\right)
\end{equation}
and
\begin{equation}
    \beta({\bf y}) = \frac{1}{T}\left(\int_0^T \omega_0(t){\bf y}(t)+\kappa {\bf y}(T) \right).
\end{equation}
We conclude that
\begin{equation}
    {\bf \hat{q}} = \gamma^{-1}\beta({\bf y}).
\end{equation}

\acks{This work was supported by the Air Force Office of Scientific Research,
  award FA9550-19-1-0005.}

\bibliography{references}

\end{document}